\documentclass[a4paper]{article}
\usepackage{geometry}
\usepackage{amsmath}
\usepackage{amsthm}
\usepackage{amssymb}
\usepackage{hyperref}
\usepackage{xcolor}
\usepackage{comment}
\usepackage{graphicx} 
\usepackage{cite}

\usepackage{algorithm}
\usepackage{algpseudocode}

\usepackage{matlab-prettifier} 

\usepackage[normalem]{ulem}

\usepackage{tikz}
\usepackage{pgfplots}

\newcommand{\vect}[1]{\mathrm{vec}(#1)}

\newcommand{\br}{\color{black}}
\newcommand{\er}{\color{black}}

\newtheorem{theorem}{Theorem}[section]
\newtheorem{lemma}[theorem]{Lemma}
\newtheorem{corollary}[theorem]{Corollary}

\theoremstyle{definition}
\newtheorem{definition}[theorem]{Definition}

\theoremstyle{remark}
\newtheorem{remark}[theorem]{Remark}

\title{Backward errors for multiple eigenpairs in structured and unstructured nonlinear eigenvalue problems}
  
\author{
    Miryam Gnazzo\thanks{Dipartimento di Matematica, Università di Pisa, Italy (\href{mailto:miryam.gnazzo@dm.unipi.it}{\tt miryam.gnazzo@dm.unipi.it}).},\ \ 
    Leonardo Robol\thanks{Dipartimento di Matematica, Università di Pisa, Italy (\href{mailto:leonardo.robol@unipi.it}{\tt leonardo.robol@unipi.it}).}
}
\date{\today}

\begin{document}

\maketitle

\begin{abstract}
    Given a nonlinear matrix-valued function $F(\lambda)$ and 
    approximate eigenpairs $(\lambda_i, v_i)$, 
    we discuss how  
    to determine the smallest perturbation
    $\delta F$ such that $[F + \delta F](\lambda_i) v_i = 0$; 
    we call the distance between the $F$ and $F + \delta F$
    the \emph{backward error} for this 
    set of approximate eigenpairs. We focus on the case where 
    $F(\lambda)$ is given as a linear combination of scalar functions
    multiplying matrix coefficients $F_i$, and the perturbation 
    is done on the matrix coefficients. We provide inexpensive upper 
    bounds, and a way to accurately compute the backward error
    by means of direct computations or through
    Riemannian optimization. 
    We also discuss how the backward error can be determined when 
    the $F_i$ have particular structures (such as symmetry, sparsity, or low-rank), and the perturbations are required to preserve them. For special cases (such as for symmetric coefficients), 
    explicit and inexpensive formulas to compute the 
    $\delta F_i$ are also given. 
\end{abstract}

\textbf{Keywords}: nonlinear eigenvalue problem, backward error, structured eigenvalue problem, eigenpair.

\textbf{MSC Class}:  65H17, 65F15, 15A18.

\section{Introduction}
We consider matrix-valued functions $F: \mathbb{C} \mapsto \mathbb{C}^{n \times n}$ and the related 
nonlinear eigenvalue problem, that consists in finding $\lambda \in \mathbb{C}$ and $v$ such that 
\[
F(\lambda) v=0, \qquad 
v \in \mathbb C^n \setminus \{ 0 \}.
\]
The pair $(\lambda, v)$ is called \emph{eigenpair}, and 
$\lambda$, $v$ are an \emph{eigenvalue} 
and an \emph{eigenvector}
for $F(\lambda)$, respectively. Quite often, 
the matrix-valued function $F(\lambda)$ is given in 
\emph{split form} as a linear combination of matrix 
coefficients multiplied by \br scalar functions:\er 
\begin{equation} 
\label{eq:split form}
 F(\lambda) = f_1(\lambda) F_1 + \ldots + f_k(\lambda) F_k.
\end{equation}
The coefficients $F_j$ frequently encode data coming from the 
underlying application (for instance, the coefficients of 
a stiffness or damping matrix in the PDE setting). A few representative cases of this application can be found in \cite{NLEVP}; for instance see the quadratic eigenvalue problem \texttt{spring} associated with a damped mass-spring system \cite[Example 2]{Tisseur}. It is often 
useful to consider non-polynomial scalar functions $f_j$, such as exponentials. This 
kind of matrix-valued functions arise in the context of constant-coefficients delay differential equations, see for instance \cite{Corless}. We will only consider nonlinear eigenvalue problems of the form \eqref{eq:split form}
in this work. 

Nonlinear eigenvalue problems arise in a wide set of applications; a large collection of examples can be found in the MATLAB package \texttt{nlevp} \cite{NLEVP}. Several algorithms have been developed for the numerical solution of nonlinear eigenvalue problems, see for instance \cite{GutTiss} for a survey on the nonlinear eigenvalue problem. An implementation of possible solvers for this problem is available in the Julia package NEP-PACK \cite{neppack}. Moreover, recent solvers employ a variant of the AAA algorithm for the solution of the nonlinear eigenvalue problem \cite{LietaertMeer}. 

Selected instances of this problem have been thoroughly studied 
in the literature: when $f_j(\lambda) = \lambda^{j-1}$ we obtain a 
\emph{polynomial eigenvalue problem}. For instance a complete review on the quadratic eigenvalue problem can be found in \cite{TissMeer}. In the case $k = 2$ we 
get a matrix pencil, or a standard eigenvalue problem. 

In numerical linear algebra, the standard way to assess the 
quality of computed eigenvalues and eigenvectors is to determine 
their backward error; the latter is defined as the distance from 
the closest eigenvalue problem for which the computed eigenvalues 
(and eigenvectors) are exact. 

For standard eigenvalue problems, we have explicit formulas that relate the residual norm $\| F(\lambda) v \|_2$ to the backward error, and that are part of any numerical linear algebra textbook; similar results can be given for polynomial eigenvalue problems \cite{Tisseur}. Moreover in \cite{DopicoKron} the authors proposed a backward error analysis for the solution of the polynomial eigenvalue problems and complete polynomial eigenproblems, via block Kronecker linearizations. Some results can be found for 
more general nonlinear eigenvalue problems as well. In \cite{AhmMehr}, a formulation for the backward error of a given eigenpair has been proposed in the context of homogeneous nonlinear eigenvalue problems, with particular attention to structured matrix-valued functions $F$. For rational eigenvalue problems, in \cite{Sharma} the authors derive formulae for the symmetric backward error of one eigenvalue. In \cite{KarKreMeng} the authors develop a characterization for the backward error associated with a set of eigenvalues for a matrix-valued analytic function. Their bound however does not relate with the split form of \eqref{eq:split form}, 
but rather focus on finding a small functional perturbation. 

The contribution of this work is twofold:
\begin{enumerate}
    \item We provide computable and inexpensive
      bounds for the backward error of a \emph{set of 
      eigenvalues and eigenvectors} (or for just 
      the eigenvalues, if no eigenvectors have been computed). 
    \item We give numerical procedures based on 
      Riemannian optimization that compute 
      the backward error accurately, and that do so
      retaining any structure found in the coefficients 
      $F_j$ (such as sparsity, low-rank, symmetries, \ldots).
\end{enumerate}
We also provide computable bounds for the backward 
error in the structured case, 
but these will be inexpensive only for the case of 
symmetric nonlinear eigenvalue problems. For more general structures, we found that directly computing the backward error 
is often the best way to proceed. 

A previous attempt at characterizing structured backward errors 
for nonlinear eigenvalue problems 
can be found in \cite{AhmMehr}, where the authors focus on only 
one eigenpair. Our bounds will reduce to the one in this work 
when considering a set with a single eigenpair. 

We remark that it may be \br tempting \er to use results for a single 
eigenpair to draw conclusions on the accuracy of a set 
of eigenpairs, but this can be misleading. Indeed, it may happen that the backward errors of two different approximate eigenvalues $\lambda_1$, $\lambda_2$ are small, but there is no close-by 
nonlinear eigenvalue problem that has both as eigenvalues. 
An example showcasing this possibility may be found in Section $6.2$ in \cite{KarKreMeng}. 

A similar discussion of considering a set of eigenvalues at once 
for standard eigenvalue problems can be found in \cite{TisseurChart}, together with a complete survey on structured normwise backward errors for a set of eigenpairs.
Following \cite{TisseurChart}, we define the backward error 
associated with a set of $p$ eigenpairs $(\hat \lambda_i, \hat v_i)$ 
as follows:
\[
\eta := \min \left\lbrace \|\left[ \delta F_1, \ldots, \delta F_k \right] \|_F \  \Big|\ \sum_{j=1}^k f_j(\hat \lambda_i) (F_j + \delta F_j) \hat v_i =0, \; 1 \leq i \leq p \right\rbrace.
\]
This measure does not take into account possible additional structures on the coefficient matrices $F_j$. Nevertheless it may be useful to include structures into account, for instance in situations where we would like to exploit the structure of the problem and therefore the sensitivity of the solution of the nonlinear eigenvalue problem should be measured with respect to the same structure. This is done for instance in \cite{DopicoPomes} where the authors propose a computable structured condition number for the class of parametrized quasiseparable matrices. Results 
on condition numbers of structured matrix polynomials, such symmetric and palindromic,  have been developed by Adhikari et al. in \cite{Alam}. In addition, the authors analyze the relations with the condition number of structured linearizations of the matrix polynomials.
For all contexts where 
the coefficient belong to prescribed classes of structured 
matrices, we will also introduce a structured 
backward error $\eta_{\mathcal S}$ defined analogously, but with the 
constraint of $F_j + \delta F_j$ sharing the same structure 
of $F_j$. We will discuss in detail the cases of an assigned 
sparsity pattern, a maximum rank, and symmetries. The proposed 
algorithm will be able to deal with different structures for each coefficient, and also multiple structures at once with no
modifications.

The paper is organized as follows. In Section \ref{sec:unstructured}, we analyze the unstructured backward error for the nonlinear eigenvalue problem and provide several computable upper bounds for it. In Section \ref{sec:structured}, we present an overview of structured backward errors for a set of approximated eigenpairs. For the case of general linear structures, we provide a formula for the backward error. Then we specialize the results for the symmetry structure, providing a cheaper computable upper bound for the backward error. In the end, we consider nonlinear structures, \br such as fixed rank matrices\er, for which we are able to provide an upper bound computed through the use of a Riemannian optimization-based technique. In Section \ref{sec:numerical experiments} a few numerical tests and examples conclude the paper.

\section{Backward errors for nonlinear eigenvalue problems}
\label{sec:unstructured}

We consider matrix-valued functions $F:\mathbb{C} \mapsto \mathbb{C}^{n \times n}$ in split form \eqref{eq:split form}, 
that is {\small{$F(\lambda):= \sum_{j=1}^k{F_j} f_j(\lambda)$}}, where
$F_j \in \mathbb{C}^{n\times n}$ and $f_j: \mathbb{C} \mapsto \mathbb{C}$ 
\br scalar functions \er for $j=1,\ldots,k$. Observe that given a general matrix-valued function, it is always possible to write it in split form, decomposing it as $\left[ F(\lambda)\right]_{ij}e_ie_j^T$, for $i,j=1,\ldots,n$, where $e_i,e_j$ are vectors of the canonical basis, 
so this is not restrictive. 
On the other hand, nonlinear eigenvalue problems 
arising in applications are often naturally given 
in this form with  a small $k$ \cite{NLEVP,neppack}. In 
particular, the formulation \eqref{eq:split form} also includes matrix polynomials of degree $k-1$.

\subsection{Backward errors for given eigenpairs}
Consider the nonlinear eigenvalue problem
$F(\lambda) v = 0$, and 
assume that we have identified $p$ approximate 
eigenpairs, for which we have the relations 
\begin{equation}
\label{eq:residuals}
  F(\hat \lambda_i) \hat v_i = \sum_{j = 1}^k f_j(\hat \lambda_i) F_j \hat v_i = r_i.     
\end{equation}
The vectors $r_i$ are the residuals. We provide 
the formal definition of backward errors for these 
approximate eigenpairs. 
\begin{definition} \label{def:unstructured-be}
    Given a nonlinear matrix-valued function $F(\lambda)$, consider $p$ approximate eigenpairs $(\hat \lambda_i, \hat v_i)$, for $i=1,\ldots,p$. We define the \emph{backward error of the eigenpairs} $(\hat \lambda_i, \hat v_i)$ as
    \[
        \eta := \min \Bigg\{ 
        \left\|\left[ \delta F_1,\ldots, \delta F_k \right] \right\|_F \ | \ 
        \sum_{j = 1}^k f_j(\hat \lambda_i) (F_j + \delta F_j) \hat v_i = 0 , 
        1 \leq i \leq p \Bigg\}.
    \]
\end{definition}

\begin{remark} \label{rem:solvability}
We note that a trivial solution always exists by taking $\delta F_j = -F_j$, so the minimum is taken on a non-empty set,
and that the backward error is always well-defined. 
\end{remark}

In some frameworks it is important to assign weights to the perturbations. In our setting, this can be achieved considering a weighted matrix-valued function:
\[
\alpha_1 f_1(\lambda) F_1 + \ldots + \alpha_k f_k(\lambda) F_k,
\]
and assigning different values to the weights $\alpha_i$. For simplicity, we do not include this variant in our results, but a scale of this form is possible.

The backward error $\eta$ depends 
on the approximate eigenpairs. We do not explicitly report this 
dependence to ease the notation, and we assume that they have 
been fixed throughout this section.
\br
Note that in Definition \ref{def:unstructured-be} we consider approximate eigenpairs $(\hat \lambda_i, \hat v_i)$, assuming that $\hat v_i$ is an approximate eigenvector associated with the approximate eigenvalue $\hat \lambda_i$. 
In principle, we may give an alternative definition that does not 
enforce this matching, and simply ask that a family $\hat \lambda_i$ and $\hat v_i$ is made by 
eigenvalues and eigenvectors, possibly up to a permutation in the indices of 
the latter. However, 
we prefer to avoid this because it would make the theoretical derivations 
less clear, and in practice it is easy to reorder and preprocess the data
to find good approximate eigenpairs beforehand. Throughout this work, we 
assume that this has been done in advance.
\er

We now give an explicit characterization of $\eta$.
\begin{theorem}
\label{th:back_err_with_eigenvectors}
        Let $G, V$ be the following 
        matrices: 
        \[
        G := \begin{bmatrix}
            f_1(\hat\lambda_1) & \ldots & f_k(\hat\lambda_1) \\ 
            \vdots && \vdots \\ 
            f_1(\hat\lambda_p) & \ldots&  f_k(\hat\lambda_p) \\ 
        \end{bmatrix}, 
        \qquad 
        V := \begin{bmatrix}
            \\ 
            \hat v_1 & \dots & \hat v_p \\ 
            &&
        \end{bmatrix},
      \]
      and denote by $G \odot^T V^T$ the Khatri-Rao transpose product between $G$ and $V^T$. Then the backward error 
      $\eta$ is equal to 
       \[
        \eta = \left\| 
         R \left[ 
            (G \odot^T V^T)^\dagger \right]^T
        \right\|_F, \qquad 
        R:=\sum_{j=1}^k F_j V f_j(\Lambda),
      \]
      where we define the matrix 
      \[
      \Lambda:=\begin{bmatrix}
          \hat \lambda_1 & &\\
          & \ddots & \\
          & & \hat \lambda_p
      \end{bmatrix} \in \mathbb{C}^{p\times p}. 
      \]
      In particular
      $\eta \leq 
        \sigma_{\hat p}(G \odot^T V^T)^{-1} \|{R}\|_F$, 
        where $\hat p$ is the rank of $G \odot^T V^T$.
\end{theorem}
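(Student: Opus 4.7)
The plan is to rephrase the optimization problem as a linear matrix equation and then invoke the standard minimum-Frobenius-norm formula for such equations. First, I would rewrite the constraint $\sum_j f_j(\hat\lambda_i)(F_j + \delta F_j)\hat v_i = 0$ in residual form as $\sum_j f_j(\hat\lambda_i)\delta F_j \hat v_i = -r_i$, and then collect the unknowns into the block-row matrix $\Delta := [\delta F_1,\ldots,\delta F_k] \in \mathbb{C}^{n \times kn}$. With this stacking, the $i$-th constraint becomes $\Delta w_i = -r_i$, where $w_i = [f_1(\hat\lambda_i)\hat v_i^T,\ldots,f_k(\hat\lambda_i)\hat v_i^T]^T \in \mathbb{C}^{kn}$.

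Next, I would observe that the column $w_i$ is precisely the Kronecker product of the $i$-th row of $G$ with the $i$-th row of $V^T$; assembling the $w_i$ into the columns of a single matrix gives exactly $(G \odot^T V^T)^T$. Hence the $p$ constraints can be written simultaneously as
\[
  \Delta \cdot (G \odot^T V^T)^T = -R,
\]
where $R = [r_1,\ldots,r_p]$, and a short direct computation identifies $R = \sum_{j=1}^k F_j V f_j(\Lambda)$ by checking columnwise, since the $i$-th column of $V f_j(\Lambda)$ is $f_j(\hat\lambda_i)\hat v_i$.

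At this point the problem is reduced to a standard one: minimise $\|\Delta\|_F$ subject to $\Delta M = -R$, with $M := (G \odot^T V^T)^T$. Consistency of the system is guaranteed by Remark \ref{rem:solvability} (with the explicit certificate $\Delta = -[F_1,\ldots,F_k]$, which by construction satisfies $\Delta M = -R$). The unique minimum-Frobenius-norm solution is then $\Delta^\star = -R M^\dagger$; using the identity $(A^T)^\dagger = (A^\dagger)^T$ one rewrites $M^\dagger = [(G \odot^T V^T)^\dagger]^T$, which yields the claimed expression for $\eta = \|\Delta^\star\|_F$.

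For the final inequality, I would simply combine submultiplicativity $\|AB\|_F \leq \|A\|_F \|B\|_2$ with the identity $\|M^\dagger\|_2 = \sigma_{\hat p}(M)^{-1}$, where $\hat p$ is the rank of $M$ (equivalently, the rank of $G \odot^T V^T$, since the rank is invariant under transposition). The one step that requires some care, and which I expect to be the main conceptual obstacle, is the correct identification of the constraint matrix with the Khatri–Rao transpose product and the careful bookkeeping of transposes and pseudoinverses; the rest is then essentially algebraic manipulation.
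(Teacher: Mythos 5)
Your proposal is correct. The core idea is the same as the paper's (the constraint is linear and consistent, so the minimizer is the minimum-norm solution given by a pseudoinverse), but you execute it in a genuinely different form: you keep the unknowns as the block row $\Delta = [\delta F_1,\ldots,\delta F_k]$ and solve the matrix equation $\Delta\,(G\odot^T V^T)^T = -R$ directly, whereas the paper vectorizes, writes the constraints as $\bigl((G\odot^T V^T)\otimes I_n\bigr)\,[\vect{\delta F_1};\ldots;\vect{\delta F_k}] = -r$, applies the pseudoinverse of the Kronecker-structured matrix, and only afterwards un-vectorizes to recover $-R\bigl[(G\odot^T V^T)^\dagger\bigr]^T$. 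Your route is precisely the ``alternative derivation'' the paper flags in the remark following the theorem (there attributed to Sun's Lemma 1.3); it avoids the Kronecker bookkeeping entirely and makes the low-rank structure of the optimal $\delta F_j$ (the content of Lemma \ref{lem:low-rank-deltafj}) immediate, since $\Delta^\star = -RM^\dagger$ manifestly has all blocks of the form $-R M_j^T$ with rank at most $p$. The paper's vectorized form, on the other hand, is the one that generalizes directly to the structured case of Theorem \ref{th:struct_back_err_with_eigenpairs}, where the perturbations are constrained to subspaces via the matrix $P$ and a matrix-equation formulation is no longer available. All the individual steps you rely on — the identification of the columns of $(G\odot^T V^T)^T$ with the vectors $w_i$, the identity $(A^T)^\dagger = (A^\dagger)^T$ (valid also over $\mathbb{C}$), the minimum-Frobenius-norm solution $BA^\dagger$ of a consistent system $XA=B$, and the bound $\|RM^\dagger\|_F \le \|M^\dagger\|_2\|R\|_F = \sigma_{\hat p}(M)^{-1}\|R\|_F$ — are sound.
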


\begin{proof}
    The definition of $\eta$ involves perturbations 
    $\delta F_j$ such that the relation 
    $\sum_{j = 1}^k f_j(\hat \lambda_i) (F_j + \delta F_j) \hat v_i = 0$ holds for $i = 1, \ldots, p$. This is 
    a linear relation in $\delta F_j$, which can 
    be written in matrix form as follows:
    \begin{equation}
    \label{eq:starting_relation_unstr}
      \left( 
      \underbrace{\begin{bmatrix}
           f_1(\hat \lambda_1) \hat v_1^T & 
            \ldots &
         f_k(\hat \lambda_1) \hat v_1^T \\ 
          \vdots && \vdots \\ 
        f_1(\hat \lambda_p) \hat v_p^T & 
            \ldots &
          f_k(\hat \lambda_p) \hat v_p^T
      \end{bmatrix}}_{G \odot^T V^T} 
      \otimes I_n \right) 
      \begin{bmatrix}
          \mathrm{vec}(\delta F_1) \\ 
          \vdots \\ 
          \mathrm{vec}(\delta F_k)
      \end{bmatrix} = - 
      \begin{bmatrix}
            \sum_{j = 1}^k f_j(\hat \lambda_1) F_j \hat v_1 \\ 
          \vdots \\ 
            \sum_{j = 1}^k f_j(\hat \lambda_p) F_j \hat v_p 
      \end{bmatrix}.
    \end{equation}
    In view of Remark~\ref{rem:solvability} the above linear system
    admits at least a 
    non-trivial solution. The minimum Euclidean norm 
    solution is given by 
    \begin{equation}
    \label{eq:minim_norm_solution}
      \begin{bmatrix}
          \mathrm{vec}(\delta F_1) \\ 
          \vdots \\ 
         \mathrm{vec}(\delta F_k)
      \end{bmatrix} = - \left[ 
        (G \odot^T V^T)^\dagger \otimes I_n 
      \right] r, \quad \mbox{where} \; r:=
      \begin{bmatrix}
            \sum_{j = 1}^k f_j(\hat \lambda_1) F_j \hat v_1 \\ 
          \vdots \\ 
            \sum_{j = 1}^k f_j(\hat \lambda_p) F_j \hat v_p 
      \end{bmatrix}.
    \end{equation}
Using the properties of the Kronecker product, we may write
\[
\begin{bmatrix}
\vect{\delta F_1} \\
 \vdots \\
 \vect{\delta F_k}
\end{bmatrix} = -\vect{ R \left[ (G \odot^T V^T)^{\dagger}\right]^T}.
\]
Relation \eqref{eq:minim_norm_solution} gives the following upper bound on the backward error $\eta$:
    \[
      \eta := \left\| 
      \begin{bmatrix}
        \delta F_1 \\ 
        \vdots \\ 
        \delta F_k
      \end{bmatrix}
      \right\|_F \leq 
      \| 
        (G \odot^T V^T)^\dagger \otimes I_n
      \|_2
      \| r \|_2
      = \sigma_{\hat p}(
        G \odot^T V^T
      )^{-1} \|R \|_F,
    \]
    where $\hat p$ is the rank of $G \odot^T V^T$. \end{proof}

\br
\begin{remark}
Observe that the relation \eqref{eq:starting_relation_unstr} is equivalent to 
\[
\begin{bmatrix}
    \delta F_1 & \ldots & \delta F_k
\end{bmatrix} X = - \begin{bmatrix}
    F_1 & \ldots &  F_k
\end{bmatrix} X, \quad \mbox{with } X := \begin{bmatrix}
    f_1(\hat \lambda_1) \hat{v}_1 & \ldots &   f_1(\hat \lambda_p) \hat{v}_p \\
    \vdots & & \vdots \\
      f_k(\hat \lambda_1) \hat{v}_1  & \ldots &   f_k(\hat \lambda_p) \hat{v}_p 
\end{bmatrix}, 
\]
and this can be used to provide an alternative derivation of 
the formula in Theorem~\ref{th:back_err_with_eigenvectors} as a direct 
consequence of \cite[Lemma $1.3$]{Sun}. This lemma is 
proved in \cite{Sun} as a building block 
for the analysis of the related problem of characterizing the backward error
of generalized eigenvalue problems. 
\end{remark}
\er
 
The following result shows that the minimal norm perturbations $\delta F_j$ have a low-rank structure whenever the number of eigenpairs 
considered is small, that is $p \ll n$. 

\begin{lemma} \label{lem:low-rank-deltafj}
    The minimal norm backward errors $\delta F_j$ of Theorem~\ref{th:back_err_with_eigenvectors}
    can be expressed as $\delta F_j = - R M_j^T$, for appropriate $n \times p$
    matrices $M_j$, where $R$ is the $n \times p$ residual matrix
    \[
      R = \sum_{j = 1}^k F_j V f_j(\Lambda), \qquad \Lambda = \begin{bmatrix}
          \hat \lambda_1 \\
          & \ddots \\
          && \hat \lambda_p
      \end{bmatrix}.
    \]
\end{lemma}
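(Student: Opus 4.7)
The plan is to read the claim directly off the closed-form expression for the minimal-norm perturbation already established in Theorem~\ref{th:back_err_with_eigenvectors}. From that theorem we have
\[
\begin{bmatrix} \vect{\delta F_1} \\ \vdots \\ \vect{\delta F_k} \end{bmatrix}
  = -\vect{R\bigl[(G \odot^T V^T)^{\dagger}\bigr]^T},
\]
so everything boils down to understanding the block structure of the $p\times (kn)$ matrix $\bigl[(G \odot^T V^T)^{\dagger}\bigr]^T$ and how it interacts with the $\mathrm{vec}(\cdot)$ operator on the right-hand side.

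First I would fix dimensions: since $G \odot^T V^T$ is $p\times kn$, its Moore--Penrose pseudoinverse is $kn\times p$, and the transpose is $p\times kn$. I would then partition this transpose conformally into $k$ consecutive blocks of size $p \times n$, writing
\[
\bigl[(G \odot^T V^T)^{\dagger}\bigr]^T = \begin{bmatrix} M_1^T & M_2^T & \cdots & M_k^T \end{bmatrix},
\]
which defines $k$ matrices $M_j \in \mathbb{C}^{n \times p}$. Multiplying on the left by the $n\times p$ residual matrix $R$ gives the block row
\[
R\bigl[(G \odot^T V^T)^{\dagger}\bigr]^T = \begin{bmatrix} RM_1^T & RM_2^T & \cdots & RM_k^T \end{bmatrix}.
\]

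Next I would apply $\vect{\cdot}$ to this block row; since vectorization stacks the columns of a matrix, vectorizing a horizontal concatenation of $k$ blocks of width $n$ yields the vertical stacking of the vectorizations of each block. Matching this with the left-hand side of the formula from Theorem~\ref{th:back_err_with_eigenvectors} blockwise then gives $\vect{\delta F_j} = -\vect{R M_j^T}$, and by injectivity of $\mathrm{vec}$ we conclude $\delta F_j = -R M_j^T$ for each $j = 1,\ldots,k$, with $M_j$ having exactly the size claimed.

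There is really no hard step here: the derivation is a pure bookkeeping argument on block partitions of the already-derived minimum norm solution. The only place where one has to be slightly careful is to align the block columns of $[(G \odot^T V^T)^\dagger]^T$ with the stacking order used for $\vect{\delta F_1},\ldots,\vect{\delta F_k}$ in Theorem~\ref{th:back_err_with_eigenvectors}, so that the $M_j$ correspond to the correct coefficients; once this convention is fixed, the statement follows immediately, and it also shows as a corollary that each $\delta F_j$ has rank at most $p$.
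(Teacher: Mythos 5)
Your argument is correct and is essentially the paper's own proof with slightly different bookkeeping: the paper partitions $(G \odot^T V^T)^{\dagger}$ into $n\times 1$ vector blocks and applies the identity $(m \otimes I_n)s = \vect{sm^T}$ to the Kronecker form of the minimum-norm solution, whereas you partition its transpose into $p\times n$ blocks and read the result off the equivalent $\vect{R[(G \odot^T V^T)^{\dagger}]^T}$ expression; both reduce to the same block alignment. No gap.
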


\begin{proof}
    We denote by $M := (G \odot^T V^T)$, 
    and we partition its pseudoinverse $M^{\dagger}$ in $n \times 1$ blocks as follows:
    \[
      M^{\dagger} = \begin{bmatrix}
          m_{11} & \dots & m_{1p} \\ 
          \vdots && \vdots \\ 
          m_{k1} & \dots & m_{kp} \\
      \end{bmatrix}, \qquad 
      m_{ij} \in \mathbb C^n. 
    \]
    Then, we use the relation 
    $(m \otimes I_n) s = \mathrm{vec}(sm^T)$
    and by substituting 
    in the above relation we get 
    \[
      \delta F_j = 
        - \left( r_1 m_{j1}^T  + \ldots + r_p m_{jp}^T \right), \qquad 
        r_i := \sum_{j = 1}^k f_j(\hat \lambda_i) F_j \hat v_i. 
    \]
    Hence, all $\delta F_j$ are of rank at most $p$, and can be rewritten as 
    $\delta F_j = - R M_j^T$ for appropriate $n \times p$ matrices $M_j$.
\end{proof}

Note that the fact that the $\delta F_j$ are low-rank allows to easily compute their 
Frobenius and spectral norms (for instance by means of a reduced QR factorization of 
$R$ and $M_j$). 
In addition, the fact that they all share the same left factor $R$ implies that any linear 
combination of the $\delta F_j$ still has rank at most $p$. 

\subsection{Backward errors for the eigenvalues}

If the eigenvectors are not computed or not available, we may 
give another definition of backward error as follows:
\[
    \eta := \min_{\hat v_i \neq 0} 
      \min
       \Bigg\{ \| \left[ \delta F_1, \ldots, \delta F_k \right] \|_F \ \Big| \ 
       \exists \delta F_j, 
      \sum_{j = 1}^k f_j(\hat \lambda_i) (F_j + \delta F_j)
       \hat v_i =0
    \Bigg\}.
\]
This definition coincides with minimizing Definition~\ref{def:unstructured-be} over all possible choices of 
eigenvectors $\hat v_i$, since we are looking for the closest 
nonlinear eigenvalue problem with prescribed eigenvalues, and 
no constrained on the eigenvectors. Using a small abuse of notation, we denote by $\eta$ the backward error associated with the eigenvalues $\hat \lambda_i$, even when the eigenvectors are not computed.

We may provide a version of Theorem~\ref{th:back_err_with_eigenvectors} suited to this 
scenario. 
\begin{theorem}
\label{thm:bounds_for_back_error_without_eigenvectors}
   For $i=1,\ldots,p$, denote by $\hat u_i, \hat v_i$ respectively the left and right singular vectors of the matrix $\sum_{j=i}^k f_j(\hat \lambda_i) F_j$, associated with the smallest singular value, denoted by $\hat \sigma_i$.  Let $G$ be the matrix defined in Theorem \ref{th:back_err_with_eigenvectors} and $V$ be the following matrix:
    \[
    V := \begin{bmatrix}
            \\ 
            \hat v_1 & \dots & \hat v_p \\ 
            &&
        \end{bmatrix}. 
      \]
     If $G \odot^T V^T$ has rank $\hat p$, then we have the following upper 
     and lower bounds for $\eta$:
    \[
   \max_{i=1,\ldots,p} \left( \frac{\hat \sigma_i}{\sqrt{\sum_{j=1}^k \left| f_j(\hat \lambda_i) \right|^2}} \right) \leq \eta \leq \sigma_{\hat p}(
        G \odot^T V^T
      )^{-1} \sqrt{p} \max_{i=1,\ldots,p} \hat \sigma_i .
    \]
\end{theorem}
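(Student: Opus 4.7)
The plan is to prove the two bounds separately, treating the upper bound as a direct specialisation of Theorem~\ref{th:back_err_with_eigenvectors} and the lower bound as a consequence of Weyl's inequality applied eigenvalue-by-eigenvalue.

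For the upper bound, since the definition of $\eta$ here minimises also over the choice of right eigenvectors, any particular choice yields an upper bound. I would pick $\hat v_i$ to be the right singular vector of $\sum_{j=1}^k f_j(\hat\lambda_i) F_j$ associated with the smallest singular value $\hat\sigma_i$, as suggested by the statement itself. With this choice, the $i$-th column of the residual matrix $R$ defined in Theorem~\ref{th:back_err_with_eigenvectors} equals $\hat\sigma_i \hat u_i$, so $\|R\|_F^2 = \sum_i \hat\sigma_i^2 \leq p \max_i \hat\sigma_i^2$. Plugging this into the upper bound $\eta \leq \sigma_{\hat p}(G \odot^T V^T)^{-1} \|R\|_F$ provided by Theorem~\ref{th:back_err_with_eigenvectors} immediately gives the right-hand inequality, provided the chosen $V$ makes $G \odot^T V^T$ have rank $\hat p$ (this is exactly the hypothesis stated).

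For the lower bound, I would fix an index $i$ and an arbitrary feasible tuple $(\delta F_1, \ldots, \delta F_k, \hat v_i)$ realising $\sum_j f_j(\hat\lambda_i)(F_j + \delta F_j) \hat v_i = 0$. This means the matrix $\sum_j f_j(\hat\lambda_i)(F_j + \delta F_j)$ is singular, so the perturbation $\sum_j f_j(\hat\lambda_i) \delta F_j$ must bring the smallest singular value of $\sum_j f_j(\hat\lambda_i) F_j$ down to zero. By Weyl's perturbation inequality,
\[
\hat\sigma_i \leq \Bigl\|\sum_{j=1}^k f_j(\hat\lambda_i) \delta F_j\Bigr\|_2 \leq \Bigl\|\sum_{j=1}^k f_j(\hat\lambda_i) \delta F_j\Bigr\|_F.
\]
Writing $\sum_j f_j(\hat\lambda_i) \delta F_j = [\delta F_1,\ldots,\delta F_k]\,\bigl(f(\hat\lambda_i)\otimes I_n\bigr)$ with $f(\hat\lambda_i) := (f_1(\hat\lambda_i),\ldots,f_k(\hat\lambda_i))^T$, and using the submultiplicativity of the Frobenius norm together with $\|f(\hat\lambda_i)\otimes I_n\|_2 = \sqrt{\sum_j |f_j(\hat\lambda_i)|^2}$, I obtain
\[
\hat\sigma_i \leq \sqrt{\sum_{j=1}^k |f_j(\hat\lambda_i)|^2}\,\bigl\|[\delta F_1,\ldots,\delta F_k]\bigr\|_F.
\]
Rearranging and taking the infimum over feasible perturbations gives the bound for each $i$; taking the maximum over $i$ delivers the left-hand inequality.

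The only mildly delicate point is the bookkeeping between the two definitions of $\eta$: the lower bound uses the formulation where eigenvectors are chosen freely, so it is important to stress that the argument above does not depend on any particular $\hat v_i$ — the Weyl step only uses the singularity of $F(\hat\lambda_i) + \delta F(\hat\lambda_i)$. The rest is standard matrix norm manipulation, and I do not expect any serious obstacle.
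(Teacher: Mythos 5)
Your proposal is correct, and the upper bound is obtained exactly as in the paper: choose $\hat v_i$ to be the right singular vectors, observe that the stacked residual then has blocks $\hat\sigma_i\hat u_i$ so its norm is at most $\sqrt{p}\max_i\hat\sigma_i$, and invoke the pseudoinverse bound from Theorem~\ref{th:back_err_with_eigenvectors}. Where you diverge is the lower bound. The paper works at the level of the matrix--vector product: from $\sum_j f_j(\hat\lambda_i)\,\delta F_j\,\hat v_i = -\hat\sigma_i\hat u_i$ it bounds $\hat\sigma_i \le \sum_j |f_j(\hat\lambda_i)|\,\|\delta F_j\|_F$ and applies Cauchy--Schwarz. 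You instead use Weyl's perturbation inequality on the singular values of the full matrix $\sum_j f_j(\hat\lambda_i)(F_j+\delta F_j)$, which is singular for any feasible perturbation, and then factor the perturbation as $[\delta F_1,\ldots,\delta F_k](f(\hat\lambda_i)\otimes I_n)$. Both routes land on the same inequality $\hat\sigma_i \le \sqrt{\sum_j|f_j(\hat\lambda_i)|^2}\,\eta$, but your version has a small advantage in rigor: the paper's chain, as written, presumes $\hat v_i$ is the right singular vector of the unperturbed matrix, whereas the definition of $\eta$ without eigenvectors minimises over all $\hat v_i$; your Weyl step, as you correctly emphasise, uses only the singularity of the perturbed matrix and hence applies to whichever $\hat v_i$ realises the minimum. (The paper's argument is easily repaired by noting $\|\sum_j f_j(\hat\lambda_i)F_j\hat v_i\|_2\ge\hat\sigma_i$ for any unit $\hat v_i$, so the discrepancy is cosmetic, but your formulation avoids the issue altogether.)
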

\begin{proof}
 We start by proving the upper bound for $\eta$. We consider the matrix relation
     \begin{align*}
    \begin{bmatrix}
         \mathrm{vec}(\delta F_1) \\ 
          \vdots \\ 
        \mathrm{vec}(\delta F_k)
      \end{bmatrix} &= - \left[ 
        (G \odot^T V^T)^\dagger \otimes I_n 
      \right]
      \begin{bmatrix}
            \sum_{j = 1}^k f_j(\hat \lambda_1) F_j \hat v_1 \\ 
          \vdots \\ 
            \sum_{j = 1}^k f_j(\hat \lambda_p) F_j \hat v_p 
      \end{bmatrix} \\
      &= - \left[ 
        (G \odot^T V^T)^\dagger \otimes I_n 
      \right]
      \begin{bmatrix}
         \hat \sigma_1 \hat u_1 \\ 
          \vdots \\ 
         \hat \sigma_p \hat u_p \\             
      \end{bmatrix},
     \end{align*}
from which we have the following upper bound:
\begin{align*}
    \eta \leq \sigma_{\hat p}(
        G \odot^T V^T
      )^{-1} \left\| \begin{bmatrix}
         \hat \sigma_1 \hat u_1 \\ 
          \vdots \\ 
         \hat \sigma_p \hat u_p \\             
      \end{bmatrix} \right\|_2 &= \sigma_{\hat p}(
        G \odot^T V^T
      )^{-1} \sqrt{ \sum_{i=1}^p \hat \sigma^2_i} \\
      & \leq \sigma_{\hat p}(
        G \odot^T V^T
      )^{-1} \sqrt{p} \max_{i=1,\ldots,p} \hat \sigma_i.
\end{align*}
For each $i=1,\ldots,p$, starting from the relation $\sum_{j=1}^k f_j(\hat \lambda_i) \delta F_j \hat v_i= - \hat \sigma_i \hat u_i$, we have that:
\begin{align*}
      \hat \sigma_i = \left\| \sum_{j=1}^k f_j(\hat \lambda_i) \delta F_j \hat v_i \right\|_F &\leq \sum_{j=1}^k \left| f_j(\hat \lambda_i) \right| \| \delta F_j \|_F \\
      &\leq \sqrt{\sum_{j=1}^k \left| f_j(\hat \lambda_i) \right|^2} \sqrt{\sum_{j=1}^k  \|\delta F_j \|^2_F} \\
      & \leq \sqrt{\sum_{j=1}^k \left| f_j(\hat \lambda_i) \right|^2} \eta.
\end{align*}
Then maximazing over $i=1,\ldots,p$, we obtain the following lower bound for $\eta$:
\[
\eta \geq  \max_{i=1,\ldots,p} \left( \frac{\hat \sigma_i}{\sqrt{\sum_{j=1}^k  \left| f_j(\hat \lambda_i) \right|^2}} \right).
\]
\end{proof}

For the case $p=1$, we obtain an explicit expression for the backward error $\eta$, which coincides with the one proposed by Ahmad and Mehrmann (Proposition $2.2$, \cite{AhmMehr}). 

\begin{corollary}
    For the case $p=1$, we have an explicit expression for the backward error
    \[
    \eta=\frac{\hat \sigma_1}{\sqrt{\sum_{j=1}^k  \left| f_j(\hat \lambda_i) \right|^2}},
    \]
    where $\hat \sigma_1$ denotes the smallest singular value of the matrix $\sum_{j=1}^k f_j(\hat \lambda_1) F_j$.
\end{corollary}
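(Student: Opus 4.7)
The plan is to derive the corollary as a direct specialization of Theorem~\ref{thm:bounds_for_back_error_without_eigenvectors} to the single-eigenpair case $p = 1$, showing that in this regime the lower and upper bounds collapse onto the same quantity. Once this is established, $\eta$ is sandwiched between two equal values and must coincide with their common value.

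The key ingredient is to evaluate $\sigma_{\hat p}(G \odot^T V^T)$ when $p = 1$. In that situation $G$ reduces to the $1 \times k$ row $[f_1(\hat\lambda_1), \ldots, f_k(\hat\lambda_1)]$ and $V$ consists of the single column $\hat v_1$, which by the hypotheses of Theorem~\ref{thm:bounds_for_back_error_without_eigenvectors} is a unit right singular vector of $\sum_{j=1}^k f_j(\hat\lambda_1) F_j$ associated with $\hat\sigma_1$. The Khatri--Rao transpose product therefore degenerates to the single row
\[
G \odot^T V^T = \bigl[\, f_1(\hat\lambda_1)\hat v_1^T,\ \ldots,\ f_k(\hat\lambda_1)\hat v_1^T \,\bigr] \in \mathbb{C}^{1 \times kn},
\]
whose unique nonzero singular value equals its Euclidean norm. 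Using $\|\hat v_1\|_2 = 1$, a short direct computation yields
\[
\sigma_{\hat p}(G \odot^T V^T) = \sqrt{\sum_{j=1}^k |f_j(\hat\lambda_1)|^2}.
\]

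Substituting this value and $p = 1$ into the two-sided estimate of Theorem~\ref{thm:bounds_for_back_error_without_eigenvectors}, both the lower bound $\hat\sigma_1 / \sqrt{\sum_j |f_j(\hat\lambda_1)|^2}$ and the upper bound $\sigma_{\hat p}(G \odot^T V^T)^{-1}\sqrt{1}\,\hat\sigma_1$ evaluate to the same quantity, pinning down $\eta$ at exactly that value. There is essentially no technical obstacle in this argument; the only situation deserving a brief comment is the degenerate case in which $f_j(\hat\lambda_1) = 0$ for every $j$, where $\sum_j f_j(\hat\lambda_1) F_j$ is the zero matrix so that $\hat\sigma_1 = 0$ and $\eta = 0$ trivially, and the formula is to be read with the natural convention $0/0 = 0$.
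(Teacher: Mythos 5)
Your argument is correct and follows essentially the same route as the paper: specialize Theorem~\ref{thm:bounds_for_back_error_without_eigenvectors} to $p=1$, observe that $\sigma_{\min}(G \odot^T \hat v_1^T) = \|G\|_2 = \sqrt{\sum_{j=1}^k |f_j(\hat\lambda_1)|^2}$ because $\hat v_1$ is a unit vector, and conclude that the upper and lower bounds coincide. The paper's proof carries out exactly this computation (writing the row's norm via $\|G \otimes \hat v_1^T\|_2 = \|G\|_2$), so nothing further is needed.
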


\begin{proof}
    Let $\hat v_1$ be the right singular vector associated with the singular value $\hat \sigma_1$. Then the upper bound for $\eta$ proposed in Theorem \ref{thm:bounds_for_back_error_without_eigenvectors} may be written as 
    \[
    \sigma_{\min}( G \odot^T \hat v_1^T)^{-1} \hat \sigma_1 = \|  G \otimes \hat v_1^T \|_2^{-1} \hat \sigma_1 = \| G \|_2^{-1} \hat \sigma_1 = \frac{\hat \sigma_1}{\sqrt{\sum_{j=1}^k \left| f_j (\hat \lambda_1) \right|^2} }.
    \]
\end{proof}

\subsection{Explicit upper bounds for the backward errors}
The backward errors depend on the norm of 
the pseudoinverse $G \odot^T V^T$, which is not 
necessarily easy or cheap to compute. In this section, 
we provide some upper bounds that can be used in place of 
computing the norm explicitly. 

\begin{lemma}
\label{lem:explicit_upper_bounds}
    Let $G$ be a $p \times k$ matrix and 
    $V$ be a $n \times p$ matrix, with $V$
    scaled to have $\| V e_i \|_2 = 1$ for $i = 1, \ldots, p$, where $e_i$ is the $i$-th vector of the canonical basis. 
    The following 
    bounds for the norm of $\| (G \odot^T V^T)^\dagger\|_2$ hold:
    \begin{itemize}
        \item If $p \leq kn$, then 
          $\| (G \odot^T V^T)^\dagger\|_2
          \leq \sigma_p(G)^{-1} \kappa_2(V)$, 
        \item If $p \leq k$, then $\| (G \odot^T V^T)^\dagger\|_2
          \leq \sigma_p(G)^{-1},$
    \end{itemize}
    where $\kappa(V) = \sigma_1(V) / \sigma_p(V)$ is the 
    condition number of $V$.
\end{lemma}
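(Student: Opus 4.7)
The strategy is to construct explicit right inverses $X$ of $M := G \odot^T V^T$ and exploit the fact that the Moore--Penrose pseudoinverse $M^{\dagger}$ has the smallest spectral norm among all right inverses of $M$. The starting point is the factorization $M = D_V (G \otimes I_n)$, where $D_V := \operatorname{blkdiag}(v_1^T, \ldots, v_p^T) \in \mathbb{C}^{p \times pn}$; the normalization $\|V e_i\|_2 = 1$ yields $D_V D_V^* = I_p$, so $\|D_V^*\|_2 = 1$.

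For the second bound ($p \leq k$), assume $G$ has full row rank (otherwise $\sigma_p(G)^{-1} = \infty$ and the claim is vacuous). Then $(G \otimes I_n)^{\dagger} = G^{\dagger} \otimes I_n$, and the choice $X := (G^{\dagger} \otimes I_n) D_V^*$ is a right inverse of $M$, since
\[
MX = D_V (G G^{\dagger} \otimes I_n) D_V^* = D_V D_V^* = I_p.
\]
Submultiplicativity of the spectral norm then gives $\|M^{\dagger}\|_2 \leq \|X\|_2 \leq \|G^{\dagger} \otimes I_n\|_2 \|D_V^*\|_2 = \sigma_p(G)^{-1}$.

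For the first bound ($p \leq kn$, possibly with $p > k$), the above approach fails because $G \otimes I_n$ may no longer admit a right inverse of Kronecker form. I would reduce instead to the square situation via the thin QR decomposition $V = Q_V R_V$, with $Q_V \in \mathbb{C}^{n \times p}$ having orthonormal columns and $R_V \in \mathbb{C}^{p \times p}$ invertible. Substituting $v_i^T = (R_V e_i)^T Q_V^T$ row-wise yields $M = \tilde M (I_k \otimes Q_V^T)$ with $\tilde M := G \odot^T R_V^T$, and since $(I_k \otimes Q_V^T)$ has orthonormal rows the singular values of $M$ and $\tilde M$ coincide. Because $R_V$ is invertible, $\tilde M$ further factors as $\tilde M = \hat G (I_k \otimes R_V^T)$ with $\hat G := G \odot^T I_p$, whose rows have pairwise disjoint supports. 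Consequently $\hat G \hat G^* = \operatorname{diag}(\|G(i,:)\|_2^2)$, which gives $\sigma_p(\hat G) = \min_i \|G(i,:)\|_2 \geq \sigma_p(G)$ (using $\|G(i,:)\|_2 = \|G^T e_i\|_2 \geq \sigma_p(G^T)$). The right inverse $X := (I_k \otimes R_V^{-T}) \hat G^{\dagger}$ of $\tilde M$ then satisfies
\[
\|\tilde M^{\dagger}\|_2 \leq \|X\|_2 \leq \|R_V^{-1}\|_2 \|\hat G^{\dagger}\|_2 \leq \sigma_p(V)^{-1} \sigma_p(G)^{-1},
\]
and the estimate $\sigma_p(V)^{-1} \leq \kappa_2(V)$, valid because $\|V e_i\|_2 = 1$ forces $\sigma_1(V) \geq 1$, delivers the claim.

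The main obstacle lies in the two-step factorization for the first case: one must verify that right-multiplication by $(I_k \otimes Q_V^T)$ preserves singular values, and that the disjoint-support structure of $\hat G$ indeed produces a diagonal Gram matrix whose smallest singular value dominates $\sigma_p(G)$. The concluding inequality $\sigma_p(V)^{-1} \leq \kappa_2(V)$ is a minor but crucial step that relies essentially on the unit-column normalization of $V$.
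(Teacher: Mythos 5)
Your proof is correct, and it takes a genuinely different route from the one in the paper. For the second bound the paper also reduces to the block-diagonal matrix $\operatorname{blkdiag}(v_1^T,\ldots,v_p^T)$, but it does so by post-multiplying $M := G\odot^T V^T$ by a factor $X\otimes I_n$ built from a QR factorization of $G^T$ and then propagating singular-value inequalities through that multiplication; you instead write $M = D_V(G\otimes I_n)$ with $D_V := \operatorname{blkdiag}(v_1^T,\ldots,v_p^T)$ and exhibit the explicit right inverse $(G^\dagger\otimes I_n)D_V^*$, which is shorter and makes the mechanism transparent. For the first bound the paper's argument is a one-liner based on Cauchy interlacing: $M = J(G\otimes V^T)$ for a row-selection matrix $J$, hence $\sigma_p(M)\ge \sigma_{p^2}(G\otimes V^T) = \sigma_p(G)\sigma_p(V)$; your two-step factorization $M = \hat G\,(I_k\otimes R_V^T)(I_k\otimes Q_V^T)$ is longer but fully constructive and even yields the slightly sharper constant $\bigl(\min_i\|G(i,:)\|_2\bigr)^{-1}\sigma_p(V)^{-1}$ in place of $\sigma_p(G)^{-1}\sigma_p(V)^{-1}$. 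Both of your arguments hinge on the standard fact that, for a full-row-rank matrix, the Moore--Penrose pseudoinverse has the smallest spectral norm among all right inverses (equivalently $1=\sigma_p(MX)\le\sigma_p(M)\|X\|_2$); since producing a right inverse certifies full row rank, the argument is self-consistent, and the degenerate cases ($\sigma_p(G)=0$ or $R_V$ singular) are vacuous exactly as in the paper's accompanying remark. All the intermediate claims you flag as needing verification (invariance of singular values under right multiplication by a matrix with orthonormal rows, the diagonal Gram matrix of $\hat G$, and $\sigma_1(V)\ge 1$ from the unit-column normalization) check out.
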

\begin{proof}
Let us denote by $M := G \odot^T V^T$. We first prove that if $p \leq kn$, then $\| M^\dagger\|_2 \leq \sigma_p(G)^{-1} \kappa_2(V)$. The condition 
    $p \leq kn$ implies that $\| M^\dagger\|_2 = \sigma_p(M)^{-1}$. 

    Let us denote by $J$ the $p \times p^2$ submatrix of $I_{p^2}$ 
    such that $J(G \otimes V^T) = G \odot^T V^T$; then, we have 
    \[
      \sigma_p(M) = \sigma_p(J (G \otimes V^T)) \geq 
        \sigma_{p^2}(G \otimes V^T) = 
        \sigma_p(G) \sigma_p(V), 
    \]
    where we have used the $J$ is a unitary projection and 
    therefore for any $W$ it holds $\sigma_{\min} (JW) \geq \sigma_{\min}(W)$, and the properties of the Kronecker product. Since 
    we assumed that $\| V e_i \|_2 = 1$
    which implies $\| V \|_2 \geq 1$, we have 
    $\sigma_p(V) \geq \kappa_2(V)^{-1}$, and we conclude. 
       
    To prove the second inequality, consider a 
    QR factorization of $G^T$, which has the form $Q\begin{bmatrix}
        R \\
        0
    \end{bmatrix} = G^T$ 
    for a $k \times k$ matrix $Q$ and an upper triangular $R$. 
    We then define $X$ as follows: 
    \[
      G^T = Q \begin{bmatrix}
          R \\ 0
      \end{bmatrix}, \quad 
      X = Q \begin{bmatrix}
          R^{-T} & 0 \\ 
          & \sigma_p(G)^{-1} I_{k-p}
      \end{bmatrix} \implies 
      GX = \begin{bmatrix}
          I_p & 0
      \end{bmatrix}. 
    \]  
We now right multiply $M$ by $X \otimes I_n$, which 
yields a matrix with the following block structure:
\[
  \hat M := M (X \otimes I_n) = 
    \begin{bmatrix}
        I_p & 0
    \end{bmatrix} \odot^T V^T = 
    \begin{bmatrix}
        v_1^T &&\\ 
        & \ddots & \\ 
        && v_p^T \\
    \end{bmatrix}, 
\]
where we have used the notation $v_i := Ve_i$. 
We have the following 
relation between singular values of $M$ and $\hat M$:
\[
\sigma_i(M) \geq 
  \sigma_i(\hat M) \sigma_p(X^{-1})
  = \sigma_i(\hat M) \sigma_1(X)^{-1}
  = \sigma_i(\hat M) \sigma_p(G). 
\]
We now prove that $\sigma_p(\hat M) \geq 1$, which concludes the 
proof. By the variational characterization of the singular values, 
we may write 
\begin{align*}
  \sigma_p(\hat M) &:= \min_{\|{w}\|_2 = 1} 
    \| w^T \hat M \|_2 = 
    \left\|
      \begin{bmatrix}
          w_1 v_1^T & \dots & w_p v_p^T & \times & \dots & \times 
      \end{bmatrix}
    \right\|_2 \\ 
    &\geq \left\|
      \begin{bmatrix}
          w_1 v_1^T & \dots & w_p v_p^T 
      \end{bmatrix}
    \right\|_2 
    = \sqrt{ \sum_{i = 1}^p |w_i|^2 \|{v_i}\|^2_2 } 
    = \sqrt{ \sum_{i = 1}^p |w_i|^2  } = 1,  
\end{align*}
where in the last steps we have used $\|{v_i}\|_2 = 1$ 
and $\|{w}\|_2 = 1$.

\end{proof}

\begin{remark}
    Note that in principle it may happen $\kappa(V)=\infty$ or $\sigma_p(G)=0$. In both cases, the statement of Lemma \ref{lem:explicit_upper_bounds} still holds and yields $\| (G \odot^T V)^{\dagger}\|_2 \leq \infty$.
\end{remark}

\section{Structured nonlinear eigenvalue problems}
\label{sec:structured}

In this section we propose an extension of our analysis that 
deals with the case when the coefficients $F_j$ have a specific structure that should be preserved in the backward error. For instance, they could be symmetric, Toeplitz, with a given sparsity pattern, or low-rank. Depending on the structure that we consider, we may need to provide a different approach for the 
computation of the backward error. More specifically, 
we make the assumption that $F_j \in \mathcal S_j \subseteq \mathbb C^{n \times n}$, where 
$\mathcal S_j$ is a set of matrices with a particular structure. 

We assume that $\mathcal{S}_j$ are at least differentiable manifolds 
that includes the zero. It is convenient to distinguish two cases:
\begin{enumerate}
    \item For linear structures, when all the $\mathcal S_j$ 
    are linear subspaces, we provide a formula for the structured backward error associated with a set of approximate eigenpairs; we will describe this case in Section~\ref{subsec:linear subspaces}. We will provide some results that hold 
    for symmetric matrices in Section~\ref{subsec:symmetric-backward}. 
    \item For nonlinear structures (such as fixed rank matrices), we propose an approximate upper bound, computed employing a Riemannian optimization-based approach; we will describe it in Section~\ref{sec:nonlinear-structures}. 
\end{enumerate}

\subsection{Structured coefficients in linear subspaces}
\label{subsec:linear subspaces}

If the sets $\mathcal S_j$ are linear subspaces of $\mathbb C^{n \times n}$, then we can 
write the $F_j$ in an appropriate basis:
\[
  F_j = \sum_{i = 1}^{d_j} \delta_i^j P^{(i,j)}, \qquad 
  \mathcal S_j = \mathrm{span}(
    P^{(1,j)}, \ldots, P^{(d_j,j)}
  ).
\]
From now on, we will assume that the basis given by the $P^{(i,j)}$ are
orthogonal with respect to the Frobenius inner product, and normalized to 
have Frobenius norm equal to $1$. This implies that the matrix 
\[
  P^{(j)} = \begin{bmatrix}
      \\
      \vect{P^{(1,j)}},\ \ldots,\ \vect{P^{(d_j,j)}} \\ 
      \\
  \end{bmatrix}
\]
has orthonormal columns. We denote by $P$ the block diagonal matrix collecting
all $P^{(j)}$, defined as follows:
\begin{equation} \label{eq:P}
  P := \begin{bmatrix}
      P^{(1)} \\ 
      & \ddots \\ 
      && P^{(k)}
  \end{bmatrix}.
\end{equation}
Note that since the $P^{(j)}$ are not square, the above matrix is rectangular as well, and has 
orthonormal columns. Throughout this section, particular results for the unstructured case can 
be obtained simply choosing $P = I$. 

\begin{definition}
Given $F_j \in \mathcal{S}_j$, for $j=1,\ldots,k$, where $\mathcal{S}_j$ are linear subspaces of $\mathbb{C}^{n \times n}$, consider $(V, \Lambda)$, defined as in Theorem \ref{th:back_err_with_eigenvectors}, approximate eigenpairs for the matrix-valued function $F(\lambda)=\sum_{j=1}^k f_j(\lambda) F_j$. The \emph{structured backward error} associated with $(V,\Lambda)$ is defined as:
\[
\eta_{\mathcal{S}}:= \min \left\lbrace \| \left[ \delta F_1,\ldots \delta F_k \right]\|_F : \sum_{j=1}^k \left( F_j +\delta F_j \right)  V f_j(\Lambda)=0, \; \delta F_j \in \mathcal{S}_j, \mbox{for} \; j=1,\ldots,k \right\rbrace .
\]
\end{definition}
We prove the structured analogue of Theorem~\ref{th:back_err_with_eigenvectors}.
\begin{theorem}
\label{th:struct_back_err_with_eigenpairs}
    Let $(V, \Lambda)$ approximate eigenpairs for the nonlinear eigenvalue problem with structured coefficients $F(\lambda)$, such that 
    \[
      R = \sum_{j = 1}^k F_j V f_j(\Lambda),
    \]
    and let $G$ be defined as in Theorem \ref{th:back_err_with_eigenvectors}.
    Then, the structured backward error $\eta_{\mathcal{S}}$ is equal to 
    \[
        \eta_{\mathcal{S}} = \| 
          \left[ 
          \left( \left( G \odot^T V^T \right) \otimes I_n \right) P \right]^\dagger r 
        \|_2, \qquad 
        r := \vect{R},
      \]            
      where $P$ is defined as in \eqref{eq:P}. 
      In particular, we have the upper bound 
      \[
      \eta_{\mathcal{S}} \leq 
        \sigma_{\min}(\left( \left( G \odot^T V^T \right) \otimes I_n \right) P)^{-1} \|{R}\|_F.
      \]
\end{theorem}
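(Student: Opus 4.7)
The plan is to parallel the proof of Theorem~\ref{th:back_err_with_eigenvectors}, but with the additional parameterization of the structured perturbations by the orthonormal bases of the $\mathcal{S}_j$. The end result is that the linear system for the backward error is simply multiplied on the right by the matrix $P$, at which point the formula falls out as the minimum-norm solution via the pseudoinverse.

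First I would parameterize the structured perturbations. Since $\delta F_j \in \mathcal{S}_j = \mathrm{span}(P^{(1,j)}, \ldots, P^{(d_j,j)})$, I can write $\delta F_j = \sum_{i=1}^{d_j} \xi_i^j P^{(i,j)}$, or equivalently $\vect{\delta F_j} = P^{(j)} \xi^j$ with $\xi^j = (\xi_1^j, \ldots, \xi_{d_j}^j)^T$. Stacking the vectorizations of all the $\delta F_j$ and letting $\xi$ be the concatenation of the $\xi^j$, this gives $[\vect{\delta F_1}; \ldots; \vect{\delta F_k}] = P\xi$. Because the basis matrices $P^{(i,j)}$ are Frobenius-orthonormal (equivalently, $P$ has orthonormal columns), I immediately obtain the norm identity
\[
\|[\delta F_1, \ldots, \delta F_k]\|_F^2 = \sum_{j=1}^k \|\vect{\delta F_j}\|_2^2 = \|P\xi\|_2^2 = \|\xi\|_2^2,
\]
so minimizing the Frobenius norm of the perturbations is equivalent to minimizing $\|\xi\|_2$.

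Next I would rewrite the interpolation constraint. Exactly as in the proof of Theorem~\ref{th:back_err_with_eigenvectors}, the conditions $\sum_{j=1}^k f_j(\hat \lambda_i)(F_j + \delta F_j)\hat v_i = 0$ for $i = 1, \ldots, p$ are equivalent to the linear system $((G \odot^T V^T) \otimes I_n)[\vect{\delta F_1}; \ldots; \vect{\delta F_k}] = -r$. Substituting $P\xi$ for the stacked vectorization, this becomes $((G \odot^T V^T) \otimes I_n)P\, \xi = -r$. Solvability is guaranteed because $\mathcal{S}_j$ is a linear subspace containing $F_j$: taking $\delta F_j = -F_j \in \mathcal{S}_j$ gives the trivial feasible point. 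Therefore the minimum-norm solution is
\[
\xi = - \bigl[((G \odot^T V^T) \otimes I_n)P\bigr]^\dagger r,
\]
and by the norm identity of the previous step, $\eta_{\mathcal{S}} = \|\xi\|_2$ equals the expression in the statement. The upper bound $\eta_{\mathcal{S}} \leq \sigma_{\min}(((G \odot^T V^T) \otimes I_n)P)^{-1} \|R\|_F$ then follows from $\|A^\dagger r\|_2 \leq \sigma_{\min}(A)^{-1}\|r\|_2$ together with $\|r\|_2 = \|R\|_F$.

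I do not foresee a serious obstacle here: the only mild subtlety is making the norm identity precise, which rests on the assumption (stated just above the theorem) that the $P^{(i,j)}$ are orthonormal in the Frobenius inner product, so that $P$ has orthonormal columns and the map $\xi \mapsto [\delta F_1, \ldots, \delta F_k]$ is an isometry from $\mathbb{C}^{d_1+\ldots+d_k}$ (with Euclidean norm) into the structured perturbation space (with Frobenius norm). Once this is in place, the rest is a routine application of the Kronecker-product vectorization trick and the variational characterization of the pseudoinverse solution.
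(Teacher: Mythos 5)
Your proof is correct and follows essentially the same route as the paper: express each $\vect{\delta F_j}$ as $P^{(j)}\xi^j$, use the orthonormality of the basis to turn the Frobenius norm of the perturbations into the Euclidean norm of the coefficient vector, substitute into the linear system from Theorem~\ref{th:back_err_with_eigenvectors}, and read off the minimum-norm solution via the pseudoinverse. The explicit remark on consistency of the system (via $\delta F_j = -F_j \in \mathcal{S}_j$) is a small but welcome addition that the paper leaves implicit here.
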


\begin{proof}
    Starting with relation \eqref{eq:starting_relation_unstr} provided in Theorem \ref{th:back_err_with_eigenvectors}, we get
    \[
    \left( \left( G \odot^T V^T \right) \otimes I_n \right) \begin{bmatrix}
            \vect{\delta F_1} \\
            \vdots \\
            \vect{\delta F_k} \\
    \end{bmatrix} = - r.
    \]
    We observe that $\vect{\delta F_j}= P^{\left( j\right)} \delta^j$, where $\delta^j=\begin{bmatrix}
    \delta_1^j & \ldots & \delta_{d_j}^j
\end{bmatrix}^T$, for $j=1,\ldots,k$ and therefore $\|\delta F_j \|_F=\|\delta^j \|_2$.  Then we may write the previous relation as
\[
   \left( \left( G \odot^T V^T \right) \otimes I_n \right) P \delta = - r, \qquad \delta=\begin{bmatrix}
       \delta^1 \\
       \vdots \\
       \delta^k
   \end{bmatrix}.
\]
Since $\|\delta\|_2=\| \begin{bmatrix}
    \delta F_1 & \ldots & \delta F_k
\end{bmatrix}\|_F$, we conclude the proof. The upper bound for $\eta_{\mathcal{S}}$ follows from:
\[
\| \delta \|_2= \left\| 
      \begin{bmatrix}
        \delta F_1 \\ 
        \vdots \\ 
        \delta F_k
      \end{bmatrix}
      \right\|_F \leq 
      \| 
         \left[ \left( \left( G \odot^T V^T \right) \otimes I_n \right) P \right]^{\dagger}
      \|_2
      \| r \|_2
      = \sigma_{\min}(
         \left( \left( G \odot^T V^T \right) \otimes I_n \right) P )^{-1} \|R \|_F. \qedhere
\]
\end{proof}

\subsubsection{Invariant pairs}
    It is possible to provide a generalization of Theorem \ref{th:struct_back_err_with_eigenpairs} using the notion of invariant pairs. Given a nonlinear matrix-valued function $F(\lambda) = \sum_{j = 1}^k F_j f_j(\lambda)$, 
we say that $(V, M)$ is an invariant pair if the following relation holds:
\[
  \sum_{j = 1}^k F_j V f_j(M) = 0. 
\]
Note that this implies that $\Lambda(M)$ is a subset of the spectrum of $F(\lambda)$ and 
that the associated eigenvectors belong to the column span of $V$.
Besides being useful for analyzing (for instance) stable subspaces, 
this also allows to maintain real arithmetic in case of 
complex conjugate eigenvalues. 

In this setting, denoting by
\[
\widehat G:= \begin{bmatrix}
        f_1(M)^T & \cdots & f_k(M)^T
\end{bmatrix}, \qquad R:= \sum_{j=1}^k F_j V f_j(M),
\]
and proceeding as in the proof of Theorem \ref{th:struct_back_err_with_eigenpairs}, we have that:
\[
\eta_{\mathcal{S}} = \left\| 
          \left[ \left( \left(
            \widehat G (I_k \otimes V^T) 
          \right) \otimes I_n \right) P \right]^\dagger r 
        \right\|_2, \qquad 
        r := \vect{R},
\]
and consequently the upper bound
\[
\eta_{\mathcal{S}} \leq 
        \sigma_{\min}(\left( \left(
            \widehat G (I_k \otimes V^T) 
          \right) \otimes I_n \right) P)^{-1} \|{R}\|_F.
\]

Even though Theorem~\ref{th:struct_back_err_with_eigenpairs}
provides an explicit formula for the backward error, the linear system that needs to be 
solved is much larger than the one in the non-structured case. Hence, it is sometimes convenient
to obtain the backward error through the same optimization procedures that we will describe for 
nonlinear structures in Section~\ref{sec:nonlinear-structures}.

\subsection{Symmetric backward errors}
\label{subsec:symmetric-backward}

For particular choices of $\mathcal S_j$, we can provide a more detailed analysis. We now 
focus on the case of real symmetric coefficients $F_j = F_j^T$. For the standard eigenvalue problem, in \cite{TisseurChart} Tisseur provides a complete survey on structured backward errors associated with multiple approximate eigenpairs. In particular, a formula
is provided for computing
the symmetric backward error, using the result on structured matrix problems in \cite[Lemma~2.3]{TisseurChart}. Our result 
is a generalization to the context of nonlinear eigenvalue problems with symmetric coefficients.

For simplicity, we only discuss the case of eigenvalues and eigenvectors, even though the 
same analysis can be generalized to invariant pairs with a moderate effort. 
The problem can be stated as, given a $n \times p$ matrix $V$ and a diagonal matrix 
$\Lambda$, finding real symmetric perturbations $\delta F_j = \delta F_j^T$ such that 
\begin{equation} \label{eq:fjeq}
  \sum_{j = 1}^k (F_j + \delta F_j) V f_j(\Lambda) = 0.
\end{equation}
\begin{theorem} \label{thm:backward-error-symmetric}
    Let $F_j$ be real symmetric matrices, and $(V, \Lambda)$  approximate eigenpairs with  
    a diagonal matrix $\Lambda$ such that $R = \sum_{j = 1}^k F_j V f_j(\Lambda)$. Let $V = QT$
    be an economy size QR factorization of $V$, and define 
    \[
      \tilde T = \begin{bmatrix}
          T f_1(\Lambda) \\ \vdots \\ 
          T f_k(\Lambda) \\
      \end{bmatrix}, \qquad 
      M_S := \left[ \begin{array}{ccc}
          &  \tilde T^T \otimes I_p & \\ \hline 
          \Pi_{p,p} - I_{p^2} && \\
          & \ddots & \\
          && \Pi_{p,p} - I_{p^2}
      \end{array}  \right], 
    \]    
    where $\Pi_{p,p}$ is the $(p,p)$ commutation matrix (or perfect shuffle, see \cite{van2000ubiquitous}). 
    Then, there 
    exist symmetric real perturbations $\delta F_j$ such that $(V, \Lambda)$ are eigenpairs 
    for $\sum_{j = 1}^k (F_j + \delta F_j) f_j(\lambda)$, and 
\[
\delta F_j = Q \begin{bmatrix}
    A_{11}^{\left(j\right)} &    \left( A_{21}^{\left(j\right)} \right)^T \\
    A_{21}^{\left(j\right)}  & 0_{\left( n-p \right)  \times \left( n-p \right)}\\
\end{bmatrix} Q^T,
\]
where $A_{11}^{\left( j\right)}$ and $A_{21}^{\left( j\right)}$ solve the equations:
\[
      \begin{bmatrix}
          A^{(1)}_{21} & \dots & A^{(k)}_{21}
      \end{bmatrix} = B_2 \tilde T^\dagger, \quad \mbox{and} \quad
     M_S \left[ 
        \begin{array}{c}
             \vect{A_{11}^{(1)}} \\
             \vdots \\
             \vect{A_{11}^{(k)}}
        \end{array}
      \right] = 
      \left[ 
        \begin{array}{c}
             \vect{B_1} \\ \hline 
             0 \\
             \vdots \\
             0 \\
        \end{array}
      \right],
\]
where we denote by $B_1,B_2$ the $p\times p$ and $\left( n-p\right) \times p$ blocks of $\begin{bmatrix}
    B_1 \\
    B_2
\end{bmatrix}=-Q^T R$, respectively.
\end{theorem}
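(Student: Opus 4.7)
The plan is to parametrize the real symmetric perturbations in a basis adapted to the column span of $V$. First, I would complete the economy-size QR factor $Q \in \mathbb{R}^{n \times p}$ to a full orthogonal matrix by choosing some $Q_\perp$ so that $[Q \ Q_\perp] \in \mathbb{R}^{n \times n}$ is orthogonal. Since each $\delta F_j$ is real symmetric, it admits the block form
\[
\delta F_j = [Q \ Q_\perp] \begin{bmatrix} A_{11}^{(j)} & (A_{21}^{(j)})^T \\ A_{21}^{(j)} & A_{22}^{(j)} \end{bmatrix} [Q \ Q_\perp]^T,
\]
with $A_{11}^{(j)}$ and $A_{22}^{(j)}$ symmetric. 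Plugging this expression into \eqref{eq:fjeq}, multiplying on the left by $[Q \ Q_\perp]^T$, and using $V = QT$ so that $[Q \ Q_\perp]^T V = \begin{bmatrix} T \\ 0 \end{bmatrix}$, the constraint reduces to
\[
\sum_{j=1}^k \begin{bmatrix} A_{11}^{(j)} \\ A_{21}^{(j)} \end{bmatrix} T f_j(\Lambda) = -[Q \ Q_\perp]^T R = \begin{bmatrix} B_1 \\ B_2 \end{bmatrix}.
\]
The block $A_{22}^{(j)}$ is absent from the equations, so one may set $A_{22}^{(j)} = 0$, recovering exactly the block structure of $\delta F_j$ given in the statement.

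Next, I would observe that the two block rows of this reduced system decouple into independent linear equations, one for the $A_{11}^{(j)}$'s and one for the $A_{21}^{(j)}$'s. Packing the latter family side-by-side and grouping the factors $Tf_j(\Lambda)$ into the single tall matrix $\tilde T$ recasts the off-diagonal equation as $[A_{21}^{(1)} \ \cdots \ A_{21}^{(k)}] \tilde T = B_2$. No symmetry is imposed on these blocks, so its minimum Frobenius norm solution is $[A_{21}^{(1)} \ \cdots \ A_{21}^{(k)}] = B_2 \tilde T^\dagger$, matching the statement exactly.

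For the $A_{11}^{(j)}$'s the analogous packed equation is $[A_{11}^{(1)} \ \cdots \ A_{11}^{(k)}] \tilde T = B_1$, but with the extra requirement that each $A_{11}^{(j)}$ be symmetric. I would encode this via the commutation-matrix identity $\vect{X^T} = \Pi_{p,p} \vect{X}$, which turns symmetry of the $j$-th block into the linear condition $(\Pi_{p,p} - I_{p^2}) \vect{A_{11}^{(j)}} = 0$. Vectorizing the packed residual equation yields the top row block $\tilde T^T \otimes I_p$ acting on the stack $(\vect{A_{11}^{(1)}}, \ldots, \vect{A_{11}^{(k)}})$, and appending the $k$ symmetry conditions underneath assembles precisely the matrix $M_S$ and the right-hand side $(\vect{B_1}, 0, \ldots, 0)$ of the statement; any solution (for example the one given by $M_S^\dagger$) provides admissible symmetric blocks. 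The delicate point is verifying that this stacked system is actually consistent, i.e.\ that the residual-matching row and the symmetry rows can be satisfied simultaneously. This should follow from the fact that the trivial perturbation $\delta F_j = -F_j$ already exhibits a symmetric solution to the original problem, so the reduction via $[Q \ Q_\perp]$ never destroys feasibility; the block decomposition and the rank of $\tilde T$ only affect whether the solution is unique, not whether it exists.
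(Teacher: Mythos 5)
Your proposal is correct and follows essentially the same route as the paper's proof: an (extended) QR factorization of $V$ to block-diagonalize the constraint, setting $A_{22}^{(j)}=0$, taking the minimum-norm solution $B_2\tilde T^\dagger$ for the off-diagonal blocks, and encoding symmetry of the $A_{11}^{(j)}$ via the commutation matrix to assemble the $M_S$ system. Your consistency argument (feasibility of $\delta F_j=-F_j$ is preserved under the orthogonal change of basis and the decoupling) is also the one the paper uses.
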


\begin{proof}
We note that choosing $\delta F_j = -F_j$ gives a valid solution to the linear system; hence, we know 
a-priori that the set of all solutions is non-empty, and we look for the minimum norm one. 
    As a preliminary step, we choose a unitary matrix $Q$ such that 
    \[
      V f_j(\Lambda) = Q T_j = Q \begin{bmatrix}
          \tilde T_j \\ 
          0_{(n-p) \times p}
      \end{bmatrix}, \qquad 
      \tilde T_j \in \mathbb C^{p \times p},
    \]
    with upper triangular matrices $\tilde T_j$. Since $f_j(\Lambda)$ is diagonal, such $Q$ can be 
    constructed from a QR factorization of $V$. We now left multiply \eqref{eq:fjeq} by $Q^T$
    obtaining 
    \[
      \sum_{j = 1}^k Q^T \delta F_j Q T_j = - \sum_{j = 1}^k Q^T F_j Q T_j =: - Q^T R, 
    \]
    where we have used $Q^T V f_j(\Lambda) = T_j$. By partitioning $Q^T \delta F_j Q$
    as follows 
    \[
      Q^T \delta F_j Q = \begin{bmatrix}
          A^{(j)}_{11} & (A^{(j)}_{21})^T \\
          A^{(j)}_{21} & A^{(j)}_{22} \\
      \end{bmatrix}, \qquad 
      - Q^T R = \begin{bmatrix}
          B_1 \\ B_2
      \end{bmatrix} ,
    \]
    we can rewrite the previous equation as 
    \[
      \sum_{j = 1}^k Q^T \delta F_j Q T_j = 
      \sum_{j = 1}^k \begin{bmatrix}
          A^{(j)}_{11} & (A^{(j)}_{21})^T \\
          A^{(j)}_{21} & A^{(j)}_{22} \\
      \end{bmatrix} \begin{bmatrix}
          \tilde T_j \\ 
          0_{(n-p) \times p}
      \end{bmatrix} = 
      \sum_{j = 1}^k \begin{bmatrix}
          A^{(j)}_{11}  \\
          A^{(j)}_{21}  \\
      \end{bmatrix} 
      \tilde T_j = \begin{bmatrix}
          B_1 \\ B_2
      \end{bmatrix} .
    \]
    The only condition to have a symmetric solution is to ensure that 
    $A^{(j)}_{11} = (A^{(j)}_{11})^T$ for all $j = 1, \ldots, k$, 
    and the above equation 
    decouples in the independent linear systems 
    \begin{equation} \label{eq:Atj}
      \sum_{j = 1}^k 
          A^{(j)}_{11} 
      \tilde T_j = 
          B_1, \qquad 
    \sum_{j = 1}^k
          A^{(j)}_{21}
      \tilde T_j = B_2.
    \end{equation}
    Note that \[
      \| Q^T \delta F_j Q \|_F^2 = 
        \| A^{(j)}_{11}  \|_F^2 + 
        2 \| A^{(j)}_{21}  \|_F^2 + 
        \| A^{(j)}_{22}  \|_F^2. 
    \]
    Since we are looking for the minimum norm solution, we can choose 
    $A^{(j)}_{22}=0$, and $A^{(j)}_{21}$ as the minimum norm solution of 
    the right equation in \eqref{eq:Atj}:
    \begin{equation}
    \label{eq:equation_block21}
      \begin{bmatrix}
          A^{(1)}_{21} & \dots & A^{(k)}_{21}
      \end{bmatrix} = B_2 \tilde T^\dagger, \qquad 
      \tilde T := \begin{bmatrix}
          \tilde T_1 \\ 
          \vdots \\
          \tilde T_k
      \end{bmatrix}.
    \end{equation}

\br
As we argued at the beginning of the proof, the linear system \eqref{eq:fjeq} is consistent, with a solution given by $\delta F_j = -F_j$. Since the decoupled equations \eqref{eq:Atj} are equivalent to \eqref{eq:fjeq}, they also admit a solution. Multiplying by the pseudoinverse 
of $T$ yields the minimum norm solution among all the 
minimizers (in Frobenius norm), so in this case we also 
have that $B_2 \tilde T^\dagger \tilde T = B_2$. 
\er
    
    To determine $A^{(j)}_{11}$, we write a linear system with the left equation 
    in \eqref{eq:Atj} together with the symmetry condition 
    $A^{(j)}_{11} = (A^{(j)}_{11})^T$. This yields 
    \begin{equation}
    \label{eq:equation_block11}
    \underbrace{
      \left[ \begin{array}{ccc}
          &  \tilde T^T \otimes I_p & \\ \hline 
          \Pi_{p,p} - I_{p^2} && \\
          & \ddots & \\
          && \Pi_{p,p} - I_{p^2}
      \end{array}  \right]}_{ =: M_S}
      \left[ 
        \begin{array}{c}
             \vect{A_{11}^{(1)}} \\
             \vdots \\
             \vect{A_{11}^{(k)}}
        \end{array}
      \right] = 
      \left[ 
        \begin{array}{c}
             \vect{B_1} \\ \hline 
             0 \\
             \vdots \\
             0 \\
        \end{array}
      \right],
   \end{equation}
    where $\Pi_{p,p}$ is the commutation matrix (or perfect shuffle) 
    such that $\Pi_{p,p} \vect{X} = \vect{X^T}$ \cite{van2000ubiquitous}. 
    We know that the system is solvable, so we can characterize the minimum norm solution by taking 
    the pseudoinverse of the matrix on the left.
\end{proof}

\begin{corollary}
\label{cor:corollary_bound_symm}
Under the hypotheses and the notation of Theorem~\ref{thm:backward-error-symmetric}, we have the following upper bound for the structured backward error associated with the approximate eigenpairs 
$(V,\Lambda)$:
    \[
      \eta \leq 
      \| R \|_F^2 \left( \| M_S^\dagger \|_F^2 + 2 \| \tilde T \|_F^2 \right). 
   \]
\end{corollary}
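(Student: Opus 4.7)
The plan is to combine the explicit formulas for the blocks $A_{11}^{(j)}$ and $A_{21}^{(j)}$ given in Theorem~\ref{thm:backward-error-symmetric} with the unitary invariance of the Frobenius norm, so that the two block contributions to $\eta$ can be bounded separately in terms of $\|R\|_F$ and the two quantities $\|M_S^\dagger\|_F$ and $\|\tilde T^\dagger\|_F$ that come from inverting the corresponding linear systems.

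The first step is the decomposition of the squared backward error. Since $Q$ is unitary and the optimal perturbations in Theorem~\ref{thm:backward-error-symmetric} have $A_{22}^{(j)} = 0$, I would write
\[
\eta^2 = \sum_{j=1}^k \|\delta F_j\|_F^2 = \sum_{j=1}^k \|A_{11}^{(j)}\|_F^2 + 2 \sum_{j=1}^k \|A_{21}^{(j)}\|_F^2.
\]
For the second sum, I would use the identity $[A_{21}^{(1)}, \ldots, A_{21}^{(k)}] = B_2 \tilde T^\dagger$ from \eqref{eq:equation_block21} together with submultiplicativity of the Frobenius norm to bound it by $\|B_2\|_F^2 \|\tilde T^\dagger\|_F^2$. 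For the first sum, I would apply $M_S^\dagger$ to the right-hand side of \eqref{eq:equation_block11}, obtaining the minimum-norm solution, and use submultiplicativity again to get $\sum_j \|A_{11}^{(j)}\|_F^2 \leq \|M_S^\dagger\|_F^2 \|B_1\|_F^2$.

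To close the argument, I would invoke unitary invariance one more time to write $\|B_1\|_F^2 + \|B_2\|_F^2 = \|Q^T R\|_F^2 = \|R\|_F^2$, and factor this common quantity out of both contributions. This yields an estimate of the form $\eta^2 \leq \|R\|_F^2(\|M_S^\dagger\|_F^2 + 2\|\tilde T^\dagger\|_F^2)$, which is the stated bound (with the pseudoinverse understood on $\tilde T$, and with the square interpreted on the left-hand side to be dimensionally consistent).

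There is no deep obstacle here; the only point requiring care is justifying the use of $M_S^\dagger$, which presupposes the consistency of the system \eqref{eq:equation_block11}. That consistency is not obvious from the shape of $M_S$ alone, but it was already established inside the proof of Theorem~\ref{thm:backward-error-symmetric} from the fact that the trivial choice $\delta F_j = -F_j$ provides a symmetric solution to the original equation, hence yields a compatible right-hand side also for the reduced system.
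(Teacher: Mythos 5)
Your proof is correct and follows essentially the same route as the paper: decompose $\sum_j\|\delta F_j\|_F^2$ into the $A_{11}^{(j)}$ and $A_{21}^{(j)}$ contributions via unitary invariance, bound each by submultiplicativity through $M_S^\dagger$ and $\tilde T^\dagger$, and absorb $\|B_1\|_F,\|B_2\|_F$ into $\|R\|_F$. Your two side remarks are also right: the bound as stated controls $\eta^2$ rather than $\eta$, and the factor should read $\|\tilde T^\dagger\|_F^2$ rather than $\|\tilde T\|_F^2$, since $A_{21}$ is obtained by multiplying $B_2$ with the pseudoinverse of $\tilde T$.
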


\begin{proof}
    Consider the minimum solution $\delta F_j$, given by Theorem~\ref{thm:backward-error-symmetric}. The relation \eqref{eq:equation_block11} yields to the upper bound  $     \| A_{11}^{(1)} \|_F^2 + \ldots + \| A_{11}^{(k)} \|_F^2 \leq 
      \| B_1 \|_F^2 \cdot \| 
        M_S^\dagger
      \|_F^2. 
    $
    Combining this and the expression on $A_{21}^{(j)}$ in \eqref{eq:equation_block21}, we obtain 
    \begin{align*}
      \sum_{j = 1}^k \| \delta F_j \|_F^2 &= 
        \sum_{j = 1}^k \| Q^T \delta F_j Q \|_F^2 
        = \sum_{j = 1}^k \left( \| A_{11}^{(j)} \|_F^2 + 2 \| A_{21}^{(j)} \|_F^2 \right) \\
        &\leq 
        \| R \|_F^2 \left(
          \| M_S^\dagger \|_F^2 + 2 \| \tilde T \|_F^2
        \right). \qedhere
    \end{align*}
\end{proof}

\begin{remark}
    In Corollary~\ref{cor:corollary_bound_symm}, a bound for the backward error can be computed 
    cheaply, whenever $p$ is small. Indeed, the matrices $M_S$ and $\tilde T$ can be computed with 
    $\mathcal O(np^2 + p^6 k^3)$. A lower complexity in $p$ may be achieved exploiting the structure 
    of $M_S$. The dominant term is $np^2$ as long as $p^4 k^3 < n$, which is realistic in large scale  
    applications where only a few eigenmodes are necessary. 
\end{remark}

\subsection{Nonlinear structures} \label{sec:nonlinear-structures}

In this section we describe the more general case of coefficients
$F_j$ belonging to a differentiable manifold, which may not be a linear subspace. One of the most relevant examples is taking 
$F_j \in \mathbb{R}_{r_j}^{n \times n}$, where we denote by $\mathbb{R}_{r_j}^{n \times n}$ the set of real matrices of rank $r_j$ and size $n \times n$. 

In this context, we may not write the perturbations $\delta F_j$ as linear combinations of a set of matrices $P^{\left( 1\right)}, \ldots, P^{\left( d_j \right)}$. 
In this case, we have that each matrix coefficient $F_j$ and each perturbed coefficient $F_j + \delta F_j$ belong to a manifold $\mathcal{S}_j \subseteq \mathbb{R}^{n \times n}$. Given $F_j \in \mathcal{S}_j$, we can rephrase the definition of backward error in the following way:
\[
\eta_{\mathcal{S}} =  \min \left\lbrace \| \left[ \delta F_1,\ldots, \delta F_k \right] \|_F : \;  \sum_{j=1}^k \left( F_j + \delta F_j \right) V f_j(\Lambda)=0, \; (F_j + \delta F_j) \in \mathcal{S}_j \right\rbrace,
\]
where $(V,\Lambda)$ contains approximate eigenpairs, as defined as in Subsection \ref{subsec:linear subspaces}.

\begin{remark}
Note that the previous definition coincides with the one provided in Subsection \ref{subsec:linear subspaces} for linear structures, where we have that $\delta F_j \in \mathcal{S}_j$.     
\end{remark}

In this setting, we may not provide an explicit formula for the structured backward error, as in Subsection \ref{subsec:linear subspaces}. Nevertheless, we may numerically approximate an upper bound for the structured backward error, employing Riemannian optimization. Denote by $\tilde F_j:= F_j + \delta F_j$, for $j=1,\ldots,k$. Consider a $\mu >0$, we may define the functional
\begin{align}
\label{eq:functional_to_minimize}
f : \mathcal{S}_1 \times \ldots \times \mathcal{S}_k &\mapsto \mathbb{R}\\
    f\left( \tilde F_1,\ldots,\tilde F_k \right) & \mapsto \|\sum_{j=1}^k \tilde F_j V f_j(\Lambda) \|_F^2 + \mu \|\begin{bmatrix}
    \tilde F_1 - F_1 & \ldots & \tilde F_k - F_k
\end{bmatrix} \|^2_F. \notag
\end{align}
The parameter $\mu$ is needed to force the optimization 
algorithm to find a minimum norm solution. Therefore, an upper bound for the structured backward error may be obtained minimizing the functional $f$ on the product manifold $\mathcal{S}:=\mathcal{S}_1 \times \ldots \times \mathcal{S}_k$. This setting allows us to employ Riemannian optimization, minimizing the function $f$ on the product manifold $\mathcal{S}$. We chose to implement this idea relying on \texttt{manopt}, a MATLAB package for Riemannian optimization \cite{BoumalMishraAbsil}, and in particular its implementation of the Riemannian trust region method. To this end, we recall a few results on the product manifolds and perform the computation of the Riemannian gradient and the Riemannian Hessian on $\mathcal{S}$, which we need for the trust-region method on Riemannian manifolds \cite{AbsilTrustregion}.

The product manifold $\mathcal{S}$ can be treated working separately on the manifolds $\mathcal{S}_j$. Indeed, the tangent space of $\mathcal{S}$ can be defined as
\[
T_{\left( \tilde {F}_1, \ldots, \tilde {F}_k \right)} \left( \mathcal{S} \right) := T_{\tilde F_1}(\mathcal{S}_1) \times \ldots \times T_{\tilde F_k}(\mathcal{S}_k),
\]
and the scalar product that we consider on it is the one inherited from the products on $\mathcal{S}_j$ for $j=1,\ldots,k$, that is:
\[
\left\langle (u_1, \ldots, u_k), (w_1,\ldots,w_k) \right\rangle^{\mathcal{S}}_{\left(\tilde{F}_1,\ldots, \tilde{F}_k \right)} := \left\langle u_1,w_1
\right\rangle^{\mathcal{S}_1}_{\tilde F_1}+ \ldots  +  \left\langle u_k,w_k
\right\rangle^{\mathcal{S}_k}_{\tilde F_k},
\]
where $(u_1, \ldots, u_k), (w_1,\ldots,w_k) \in T_{\left( \tilde F_1,\ldots, \tilde F_k \right)} \left( \mathcal{S}\right)$ and $\left\langle \cdot, \cdot \right\rangle^{\mathcal{S}_j}$ is the product associated with $\mathcal{S}_j$. In this setting, we consider only embedded manifolds $\mathcal{S}_j \subseteq \mathbb{R}^{n \times n}$ for $j=1,\ldots,k$, then the product is the real inner product $\left\langle u,v \right\rangle:=\mbox{trace}\left( u^T v\right)$.

Both the Riemannian gradient and the Riemannian Hessian for the function $f$ can be computed starting from the Euclidean ones. In particular, the Riemannian gradient is obtained computing the orthogonal projection of the Euclidean gradient of $f$ (here we denote by $f$ the smooth extension of the functional \eqref{eq:functional_to_minimize} to the ambient space) onto the tangent space $T(\mathcal{S})$. The computation of the Riemannian Hessian of $f$ needs both the Euclidean gradient and the Euclidean Hessian for $f$ and it can be obtained through the Weingarten map (see \cite[Section 5]{Boumal} for the details).

Even though the projection of both the gradient and the Hessian on the product manifold $\mathcal{S}$ is handled automatically in \texttt{manopt}, we will need to implement this carefully to make it efficient. To this end, we first need to derive the Euclidean gradient and the Euclidean Hessian of the functional $f$.

It is convenient to write the functional as
\begin{align*}
    f( \tilde F_1, \ldots, \tilde F_k )= \left\langle \tilde FW, \tilde F W \right\rangle + \mu \left\langle \tilde F - F, \tilde F - F\right\rangle,
\end{align*}
where
\[
\tilde F:=\begin{bmatrix}
    \tilde F_1 & \ldots & \tilde F_k
\end{bmatrix}, \; F:=\begin{bmatrix}
    F_1 & \ldots & F_k
\end{bmatrix} \; \mbox{and} \; W:=\begin{bmatrix}
    V f_1(\Lambda) \\
    \vdots \\
    V f_k(\Lambda) \\
\end{bmatrix}.
\]
In order to compute the Euclidean gradient of the functional $f$, we perform the directional derivative of $f$:
\[
\mbox{D} f(\tilde F)[\tilde E]=\frac{d}{dt} 
    f( \tilde F_1 + t \tilde E_1, \ldots, \tilde F_k + t \tilde E_k) \bigg|_{t=0},
\]
in the direction $\tilde E:= \begin{bmatrix}
    \tilde E_1 & \cdots & \tilde E_k
\end{bmatrix}$. In this case, we may write
\begin{align*}
    \frac{d}{dt} 
    f( \tilde F_1 + t \tilde E_1, \ldots, \tilde F_k + t \tilde E_k)\bigg|_{t=0} &= \sum_{j=1}^k 2 \left\langle \tilde E_j W_j, \tilde F W \right\rangle + 2\mu \left\langle \tilde E_j, \tilde F_j - F_j\right\rangle \\
    &= \sum_{j=1}^k \left\langle \tilde E_j , 2\tilde F W W_j^T + 2\mu (\tilde F_j -F_j)\right\rangle,
\end{align*}
\br where \er $W_j$ is the $j$-th block row of $W$ and we used the circulant property of the trace in the last step. Then using that the Euclidean gradient is the unique vector such that
\[
\forall \tilde F, \tilde E \in \mathbb{R}^{n\times n} \times \ldots \times \mathbb{R}^{n\times n} \qquad \mbox{D} f(\tilde F)[\tilde E] = \left\langle \tilde E, \mbox{grad} f(\tilde F) \right\rangle,
\]
we conclude that $\mbox{grad} f(\tilde F)= 2\tilde F W W^T + 2\mu (\tilde F - F)$.
In the implementation of the Euclidean gradient in \texttt{manopt}, it is useful to split the contributions for each term of the product manifold. Then we may consider  $\mbox{grad}_{\tilde F_j} f(\tilde F)= 2\tilde FW W_j^T + 2\mu (\tilde F_j - F_j)$. 

The Euclidean Hessian of the function $f:\mathbb{R}^{n\times n} \times \ldots \times \mathbb{R}^{n \times n} \mapsto \mathbb{R}$ at the point $\left( \tilde F_1, \ldots, \tilde F_k \right)$ is defined as the directional derivative of the Euclidean gradient $\mbox{grad} f$
\[
\mbox{Hess} f ( \tilde F) [ \tilde E ] = \mbox{D}\, \mbox{grad} f ( \tilde F)[\tilde E] = \lim_{t \rightarrow 0} \frac{\mbox{grad}f(\tilde F + t \tilde E) - \mbox{grad}f(\tilde F)}{t},
\]
where $\tilde E \in \mathbb{R}^{n\times n} \times \ldots \times \mathbb{R}^{n\times n}$. Note that here we denote by $f$ both the functional \eqref{eq:functional_to_minimize} and its smooth extension to the ambient space. Inside the Riemannian Trust Region 
scheme, we only need to evaluate the Euclidean Hessian along a specified direction $\tilde E = ( \tilde E_1, \ldots, \tilde E_k)$, which is given by the directional derivative 
\[
\frac{d}{dt} \mbox{grad} f(\tilde F_1 + t \tilde E_1,\ldots, \tilde F_k + t \tilde E_k) \bigg|_{t=0} = 2 \tilde E WW^T + 2\mu \tilde E.
\]
Moreover, observing that 
\[
\mbox{D} \, \mbox{grad} f (\tilde F) [\tilde E] := \left( \mbox{D}\, \mbox{grad}_{\tilde F_1} f (\tilde F) [\tilde E], \ldots, \mbox{D}\, \mbox{grad}_{\tilde F_k} f (\tilde F) [\tilde E] \right),
\]
we may split the contributions of the different terms of the product manifold for the implementation in \texttt{manopt}, obtaining that 
\[
\mbox{D}\, \mbox{grad}_{\tilde F_j} f (\tilde F) [\tilde E] = 2 \tilde E W W_j^T + 2\mu \tilde E_j.
\]

Note that both the (Euclidean) gradient and the (Euclidean) Hessian have a first term which is low-rank. Indeed both $2\tilde F W W^T$ and $2 \tilde E WW^T$ are expressed in a low-rank format, therefore for several choices of manifolds we may compute their projection directly in an efficient way. 

In Subsection \ref{sub:experiments_riemannian}, we test this approach for a selected number of structures. In particular, we consider the case of sparse matrices, multiples of the identity and fixed rank matrices. For these structures, once we have computed the matrix $\tilde F W$, we handle the projection of the term $\tilde F W W_j^T$ as follows:
\begin{enumerate}
    \item $\mathcal{S}_j$ is the set of sparse matrices in $\mathbb{R}^{n \times n}$: let $\mathcal{J} \subseteq \left\lbrace 1,\ldots, n \right\rbrace^2$ the set of indices corresponding to the nonzero entries of $\mathcal{S}_j$, then the matrix-matrix multiplication between $\tilde FW$ and $W_j^T$ as
    \[
     (\tilde FW W_j^T)_{(a,b)} = \left\lbrace \begin{array}{c c}
        \sum_{c=1}^p (\tilde FW)_{(a,c)} (W_j)_{(b,c)} & (a,b) \in \mathcal{J}  \\
          0 & (a,b) \not\in \mathcal{J}
     \end{array}\right. .
    \]
    The complexity for this product is $\mathcal{O}(p \left|  \mathcal{J}\right|)$.
    \item $\mathcal{S}_j$ is the set of matrices multiple of the identity in $\mathbb{R}^{n \times n}$: we can perform the projection of the matrix $\tilde F W W_j^T$ onto the tangent space of this manifold simply computing the $\frac{1}{n}\mbox{trace}(\tilde F W W_j^T)$, for which the computational cost is $\mathcal{O}(n p^2 + n)$;
    \item $\mathcal{S}_j$ is the set of matrices of fixed rank $r_j$ in $\mathbb{R}^{n \times n}$: a rank $p$ matrix is represented as $USV^T$ by storing 
    a structure with three fields $U, V \in \mathbb{R}^{n\times p}$, $S \in \mathbb{R}^{p \times p}$, where $U,V$ are orthonormal and the matrix $S$ is any diagonal or full-rank matrix. The term $\tilde F W W_j^T$ can be represented in this way by the matrices $\tilde F W$, $W_j$ and $I_p$, respectively. 
    The latter can be projected on the tangent space of $\mathbb{R}_{r_j}^{n\times n}$ by \texttt{manopt}
    using an economy-size SVD, which requires 
    $\mathcal{O}(nr_j(p + r_j))$ flops.
\end{enumerate}

The same procedures can be repeated for handling the projection of the matrix $\tilde E W W_j^T$, for these three manifolds.

In the numerical implementation of the method, we successively solve minimization problems in the form \eqref{eq:functional_to_minimize}, for different choices of the parameter $\mu$. This approach, also known in optimization theory as penalization method, consists in solving the problem for smaller and smaller choices of the parameter $\mu$, using the solution of one step as initial point for the following one. An overview on these call of solvers for constrained optimization is contained in \cite[Section 4]{Bertsekas}, while results on their generalization to Riemannian manifolds can be found in \cite{LiuBoumal}.

We describe the approach in Algorithm \ref{Alg:riemannian}, where each minimization problem needs to be solved using the Riemannian based-method proposed in this Section.

\algblock{Begin}{End}
\begin{algorithm}[!h]
\caption{Riemannian optimization-based algorithm}
\label{Alg:riemannian}
\hspace*{\algorithmicindent} \textbf{Input}: Matrices $F:=\left[ F_1 \, \cdots \, F_k \right]$, manifold $\mathcal{S}$, functions $f_i$, $(V,\Lambda)$ approximate eigenpairs, desired accuracy $\epsilon$
\\
\hspace*{\algorithmicindent} \textbf{Output}: $\eta_{\mathcal{S}}$ upper bound for the structured backward error, decrease factor $\rho$
\begin{algorithmic}[1]
\Begin
\State Set $\mu=1$
\State Set starting point $\tilde F = F$
\While{$\sqrt{\mu} > \epsilon$}
\State $\tilde F \gets \arg \min_{F \in \mathcal{S}} f$, in \eqref{eq:functional_to_minimize}
\State $\mu \gets  \rho \mu $
\EndWhile
\State $\eta_{\mathcal{S}} \gets \| \tilde F - F \|_F$
\End
\end{algorithmic}
\end{algorithm}

\begin{remark}
In Line $6$ of Algorithm \ref{Alg:riemannian}, we suggest decreasing the parameter $\mu$ by a constant factor $\rho$ at each step. Our choice for the factor $\rho$ and additional implementation details are available at the Github repository \url{https://github.com/miryamgnazzo/backward-error-nonlinear}.
\end{remark}

It is common to measure the backward errors using the Euclidean distance, which means computing the norm $\left\|\begin{bmatrix}
    \widetilde F_1 - F_1 & \ldots & \widetilde F_k - F_k \end{bmatrix} \right\|_F$, as we propose in our analysis. However, in some situations it is possible to measure the distance between points on the manifold, using the Riemannian distance. Following the idea in \cite[Theorem 3.1]{Jacobsson}, we could rephrase our upper bounds, working directly on the distance between points on the manifold. Since \br this result employs \er lower bounds on sectional curvatures, it strictly depends on the geometry of the manifold we use. Indeed, while for flat manifolds the sectional curvature is zero, for different manifolds it can be derived from principal curvatures. In particular, for the case of manifolds of matrices of fixed rank, it can be obtained using \cite[Theorem 24]{Feppon}.

\section{Numerical experiments}
\label{sec:numerical experiments}

This section is devoted to assessing the quality of the 
theoretical bounds, and to check the effectiveness of the 
Riemannian optimization scheme in computing the backward
errors. We also include tests for symmetric nonlinear 
eigenvalue problems as described in Section \ref{subsec:symmetric-backward}. For the case of nonlinear structures, our implementation of pseudocode \ref{Alg:riemannian} in MATLAB is freely available at \url{https://github.com/miryamgnazzo/backward-error-nonlinear}, together with the codes for the bounds in Section \ref{sec:unstructured} and Subsections \ref{subsec:linear subspaces}, \ref{subsec:symmetric-backward}.

Throughout this section, all nonlinear problems for which we need 
a few eigenvalues to test have been solved 
with the Newton method initialized with different starting points. The experiments were run using MATLAB 2022b on Intel Core i7-1070H.

\subsection{Unstructured tests}

\subsubsection{The Hadeler problem}

We consider the nonlinear eigenvalue problem in the form:
\begin{equation}
\label{eq:hadeler}
F(\lambda)v=\left[\left( e^{\lambda} -1 \right)A_2 + \lambda^2 A_1 - \alpha A_0\right]v=0,
\end{equation}
where the coefficient matrices $A_i \in \mathbb{R}^{8 \times 8}$ are symmetric and $\alpha=100$. This example is known as the Hadeler problem \cite{Hadeler} and it is part of the collection of nonlinear eigenvalue problems in the MATLAB package \texttt{nlvep} \cite{NLEVP}. We consider a set of $p=3$ approximate eigenpairs of \eqref{eq:hadeler} and randomly generate a set of $1000$ perturbation matrices $\delta A_j$ for $j=0,1,2$. Then we may compute the backward errors using the formula in Theorem \ref{th:back_err_with_eigenvectors} and test the upper bounds for the unstructured backward error provided in Theorem \ref{th:back_err_with_eigenvectors} and Lemma \ref{lem:explicit_upper_bounds}. 

\begin{figure}[h]
     \centering
     \begin{tikzpicture}
       \begin{loglogaxis}[
             legend pos = north west,
             width=.9\linewidth,
             ymax=10^-1,
             height=75mm,
             title={Backward error for the Hadeler problem \eqref{eq:hadeler}},
             xlabel={$\| R \|_F$}
                      ]
         \addplot[only marks, red, mark size = 1pt] table[y index = 1] {unstructured_bounds_check_hadeler_n.dat};
         \addplot[no marks, black, dashed] table[y index = 2] {unstructured_bounds_check_hadeler_n.dat};
        \addplot[no marks, blue, dashed] table[y index = 3] {unstructured_bounds_check_hadeler_n.dat};
         \addplot[no marks, green, dashed] table[y index = 4] {unstructured_bounds_check_hadeler_n.dat};
         \legend{Backward error, $\sigma_p(G)^{-1}\|R\|_F$, $\sigma_p(G)^{-1} \kappa_2(V)\| R\|_F$, $\sigma_{\hat p}(G \odot^T V^T)^{-1}\|R \|_F$}
       \end{loglogaxis}
     \end{tikzpicture}
      \caption{Comparison among the upper bounds for the unstructured backward error in Theorem \ref{th:back_err_with_eigenvectors} and Lemma \ref{lem:explicit_upper_bounds}, applied to the Hadeler problem \eqref{eq:hadeler}.}
     \label{fig:hadeler_figure}
 \end{figure}
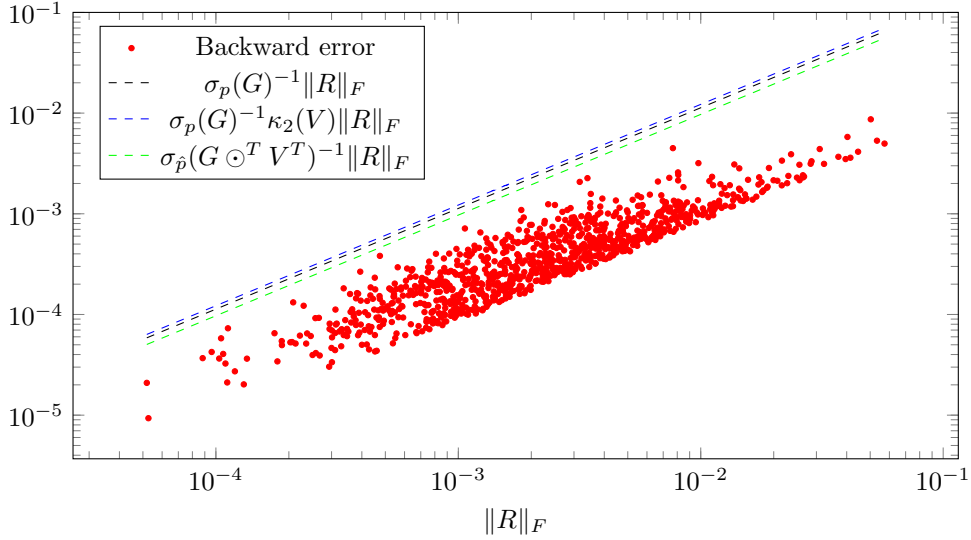

\subsubsection{The beam problem}
\label{subsubsec:beam unstructured}

We consider the delay eigenvalue problem obtained through the finite difference discretization of a one-dimensional beam with delayed stabilizing feedback, as described in \cite{VanBeeumen}:
\begin{equation}
\label{eq:beam}
    D(\lambda)= -\lambda I_n + A_0 + e^{-\lambda} A_1, \quad n=1000,
\end{equation}
where we have
\[
A_0 = \begin{bmatrix}
        A & -w^T \\
        -n\,w & n 
\end{bmatrix}, \quad A = \mbox{tridiag}(1,-2,1)\in \mathbb{R}^{\left(n-1\right) \times \left( n-1\right)}, \; w = \begin{bmatrix}
        0 & \ldots & 1
\end{bmatrix} \in \mathbb{R}^{1\times \left(n-1\right)}
\]
and $A_1= e_n e_n^T$ with $e_n$ the $n$-th vector of the canonical basis in $\mathbb{R}^n$. The coefficient matrices for this problem can be found in the example gallery presented in the NEP-PACK collection \cite{neppack}. In Figure \ref{fig:beam_figure}, we provide a comparison among the upper bounds for the unstructured backward error provided in Theorem \ref{th:back_err_with_eigenvectors} and Lemma \ref{lem:explicit_upper_bounds}. It is worth noting that the first two upper bounds for the case $p=3$ do not coincide, however this is not perceptible on the figure. Observe that for the case $p=10$, we may not use the first upper bound provided in Lemma \ref{lem:explicit_upper_bounds}.

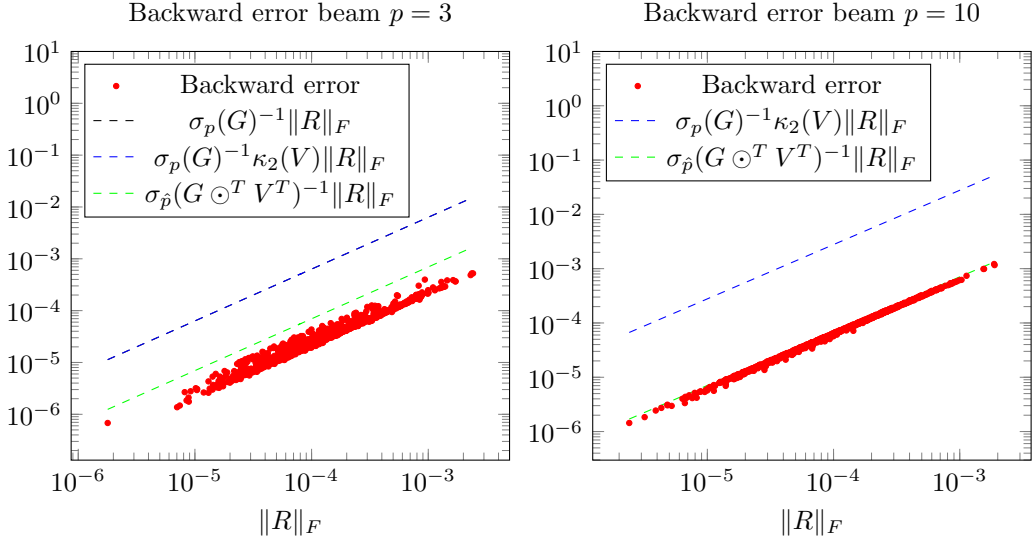
\begin{figure}[h]
     \centering
     \begin{tikzpicture}
       \begin{loglogaxis}[
             legend pos = north west,
             width=.5\linewidth,
             ymax=10,
             height= 70mm,
             title={Backward error beam $p=3$},
             xlabel={$\| R \|_F$}
                      ]
         \addplot[only marks, red, mark size = 1pt] table[y index = 1] {unstructured_bounds_check_beam.dat};
         \addplot[no marks, black, dashed] table[y index = 2] {unstructured_bounds_check_beam.dat};
        \addplot[no marks, blue, dashed] table[y index = 3] {unstructured_bounds_check_beam.dat};
         \addplot[no marks, green, dashed] table[y index = 4] {unstructured_bounds_check_beam.dat};
         \legend{Backward error, $\sigma_p(G)^{-1}\|R\|_F$, $\sigma_p(G)^{-1} \kappa_2(V)\| R\|_F$, $\sigma_{\hat p}(G \odot^T V^T)^{-1}\|R \|_F$}
       \end{loglogaxis}
     \end{tikzpicture}~\begin{tikzpicture}
       \begin{loglogaxis}[
             legend pos = north west,
             width=.5\linewidth,
             ymax=10,
             height=70mm,
             title={Backward error beam $p=10$},
             xlabel={$\| R \|_F$}
                      ]
         \addplot[only marks, red, mark size = 1pt] table[y index = 1] {unstructured_bounds_check_beam_p10.dat};
        \addplot[no marks, blue, dashed] table[y index = 3] {unstructured_bounds_check_beam_p10.dat};
         \addplot[no marks, green, dashed] table[y index = 4] {unstructured_bounds_check_beam_p10.dat};
         \legend{Backward error, $\sigma_p(G)^{-1} \kappa_2(V)\| R\|_F$, $\sigma_{\hat p}(G \odot^T V^T)^{-1}\|R \|_F$}
       \end{loglogaxis}
     \end{tikzpicture} 
      \caption{Comparison of the upper bounds for the unstructured backward error for the beam problem in \eqref{eq:beam}. On the left: we consider $p=3$ approximated eigenpairs. On the right: we consider $p=10$ approximated eigenpairs. }
     \label{fig:beam_figure}
 \end{figure}

\subsubsection{Test on randomly generated problems}
We now generate a set of random problems of the form 
\begin{equation} \label{eq:random-1}
  F(\lambda) = A_0 + \lambda A_1 + \lambda^2 I + e^{-\lambda} E_1 + e^{-2\lambda} E_2
\end{equation}
where $A_0,A_1,E_1,E_2$ are randomly symmetric generated matrices.
We fix a set of matrices $A_0,A_1,E_1,E_2 \in \mathbb{R}^{128 \times 128}$ of random matrices and a set of approximate eigenpairs $(\hat \lambda_i, \hat v_i)$ for $i=1,\ldots,p$. Then we generate $1000$ random perturbed matrix-valued function in the form:
\[
\tilde F(\lambda) = \tilde A_0 + \lambda \tilde A_1 + \lambda^2 \tilde I + e^{-\lambda} \tilde E_1 + e^{-2\lambda} \tilde E_2,
\]
and compute the backward error for the approximate eigenpairs $(\hat \lambda_i, \hat v_i)$ of the nonlinear eigenvalue problem associated with $\tilde F(\lambda)$. Then we test the error bounds for the unstructured backward error associated in Theorem~\ref{th:back_err_with_eigenvectors}, and compare it with the explicit bounds obtained in Lemma~\ref{lem:explicit_upper_bounds}. In Figure~\ref{fig:random_unstructured_plot}, the plot on the left provides the comparison for $p=3$, while the plot on the right a comparison of the bounds for $p=10$. Observe that if the number of considered approximate eigenpairs $p$ is strictly larger than the number of coefficients in the matrix-valued function, in Lemma \ref{lem:explicit_upper_bounds} the second bound does not hold.

 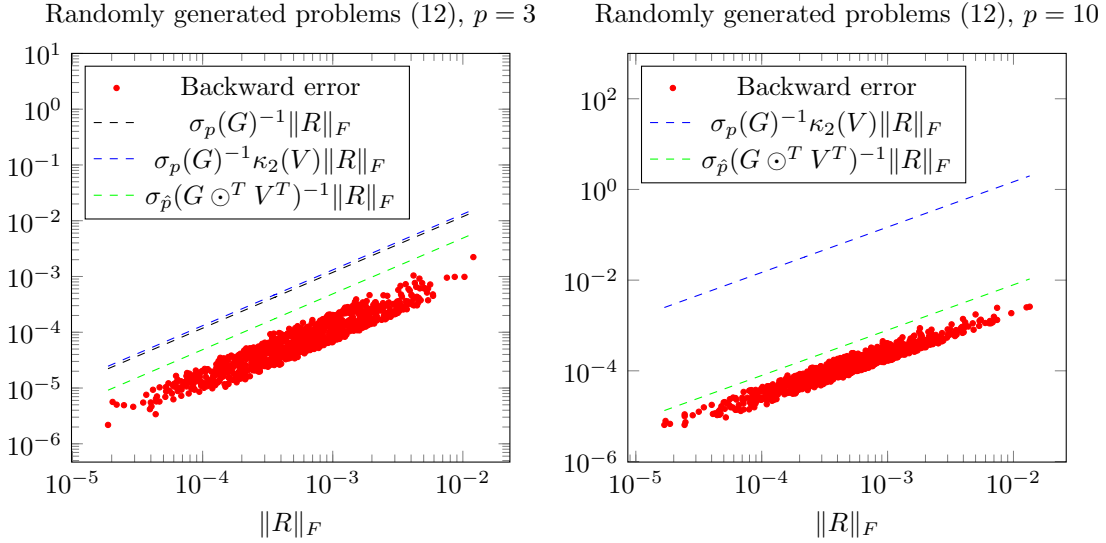
\begin{figure}[h]
 \centering
     \begin{tikzpicture}
       \begin{loglogaxis}[
             legend pos = north west,
             width=.5\linewidth,
             ymax=10,
             height=7cm,
             title={Randomly generated problems \eqref{eq:random-1}, $p=3$},
             xlabel={$\| R \|_F$}
         ]
         \addplot[only marks, red, mark size = 1pt] table[y index = 1] {unstructured_bounds_check.dat};
         \addplot[no marks, black, dashed] table[y index = 2] {unstructured_bounds_check.dat};
         \addplot[no marks, blue, dashed] table[y index = 3] {unstructured_bounds_check.dat};
         \addplot[no marks, green, dashed] table[y index = 4] {unstructured_bounds_check.dat};
         \legend{Backward error, $\sigma_p(G)^{-1}\|R\|_F$, $\sigma_p(G)^{-1} \kappa_2(V)\|R\|_F$, $\sigma_{\hat p}(G \odot^T V^T)^{-1}\|R\|_F$}
      \end{loglogaxis}
    \end{tikzpicture}~\begin{tikzpicture}
       \begin{loglogaxis}[
             legend pos = north west,
             width=.5\linewidth,
             ymax=1000,
             height=7cm,
             title={Randomly generated problems \eqref{eq:random-1}, $p=10$},
             xlabel={$\| R \|_F$}
         ]
         \addplot[only marks, red, mark size = 1pt] table[y index = 1] {unstructured_bounds_check2.dat};
         \addplot[no marks, blue, dashed] table[y index = 3] {unstructured_bounds_check2.dat};
         \addplot[no marks, green, dashed] table[y index = 4] {unstructured_bounds_check2.dat};
         \legend{Backward error, $\sigma_p(G)^{-1} \kappa_2(V)\|R\|_F$, $\sigma_{\hat p}(G \odot^T V^T)^{-1}\| R \|_F$}
       \end{loglogaxis}
     \end{tikzpicture}
     \caption{Comparison among the upper bounds for the unstructured backward error for the problem \eqref{eq:random-1}. On the left: we consider a set of $p=3$ approximate eigenpairs. On the right: we consider a set of $p=10$ approximate eigenpairs.}
     \label{fig:random_unstructured_plot}
 \end{figure}

\subsection{Structured case: linear subspaces}

\subsubsection{Randomly generated and sparse matrices}
\label{subsubsec:randomly_generated_sparse}

We consider again the matrix-valued function in \eqref{eq:random-1}, with randomly generated matrices of size $64 \times 64$, and we impose a sparsity pattern on the coefficients $A_0,A_1,E_1,E_2$, where we allow the sparsity patterns to be different from each others. We generate $1000$ random sets of coefficients for the matrix-valued function, where we preserve the sparsity pattern on the coefficients. We compute the structured backward error associated with a set of $p=3$ approximate eigenpairs for this set of randomly generated family of matrices, using the result in Theorem \ref{th:struct_back_err_with_eigenpairs}. In Figure \ref{fig:random_sparsity}, 
we provide test the upper bound for the structured backward error imposing sparsity patterns on the coefficients, against the exact formula for the structured backward error, as provided in Theorem \ref{th:struct_back_err_with_eigenpairs}. We report for completeness the upper bound for unstructured backward error provided by \ref{th:back_err_with_eigenvectors} (which does not hold in this case).

 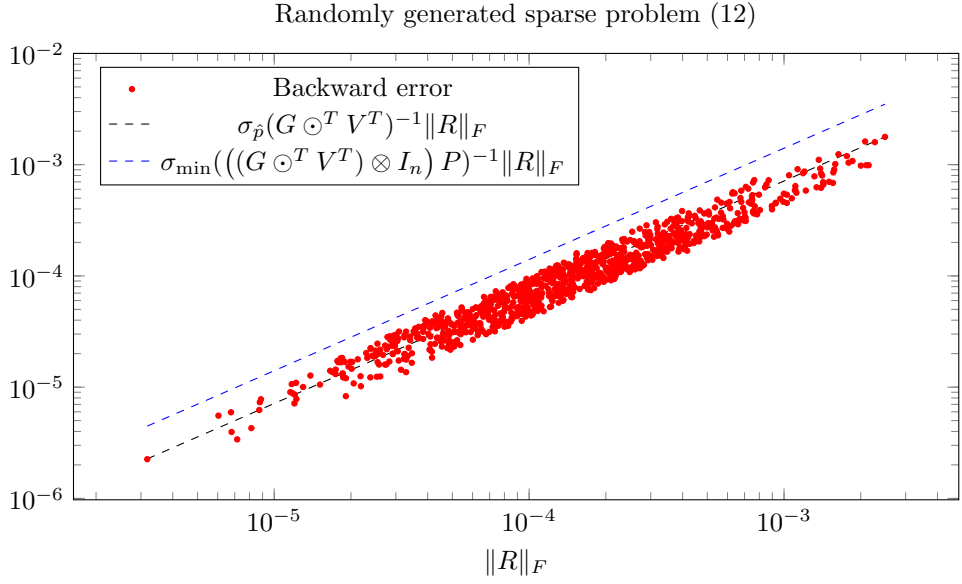
\begin{figure}[h]
     \centering
     \begin{tikzpicture}
       \begin{loglogaxis}[
             legend pos = north west,
             width=.9\linewidth,
             ymax=10^-2,
             height=75mm,
             title={Randomly generated sparse problem \eqref{eq:random-1}},
             xlabel={$\| R \|_F$}
         ]
         \addplot[only marks, red, mark size = 1pt] table[y index = 1] {linear_structured_bounds_check_fix_struct.dat};
         \addplot[no marks, black, dashed] table[y index = 2] {linear_structured_bounds_check_fix_struct.dat};
         \addplot[no marks, blue, dashed] table[y index = 3] {linear_structured_bounds_check_fix_struct.dat};
         \legend{Backward error, $\sigma_{\hat p}(G \odot^T V^T)^{-1} \| R \|_F$, $\sigma_{\min}(\left(
             (G \odot^T V^T) \otimes I_n \right) P)^{-1} \|{R}\|_F$}
       \end{loglogaxis}
     \end{tikzpicture}
      \caption{Test the upper bound for the structured backward error in Theorem \ref{th:struct_back_err_with_eigenpairs}, for the case of randomly generated sparse matrices in Subsection \ref{subsubsec:randomly_generated_sparse}. The bound for 
      unstructured case, which does not hold, is reported for completeness.}
     \label{fig:random_sparsity}
 \end{figure}

\subsubsection{Randomly generated symmetric matrices}

Consider again the nonlinear eigenvalue problem associated with \eqref{eq:random-1}, with randomly generated coefficients such that $A_i=A_i^T$ for $i=0,1$ and $E_j=E_j^T$ for $j=1,2$. As in the previous case, we run $1000$ tests, for a set of $p=3$ approximated eigenpairs. In Figure \ref{fig:random_symmetric_3bounds}, we consider an example of size $n=64$ and compute the structured backward error imposing the symmetry on the coefficient matrices, provided in Theorem \ref{thm:backward-error-symmetric}. We provide a comparison among the upper bound for general linear structures in Theorem \ref{th:struct_back_err_with_eigenpairs} and the one specialized for symmetry structures in Corollary \ref{cor:corollary_bound_symm}. 

Then we consider two randomly generated and symmetric problems as in \eqref{eq:random-1}, where the dimension of the coefficients is $n=128$ and $n=2048$. In Figure  \ref{fig:random_symmetric}, we test the upper bound in Corollary \ref{cor:corollary_bound_symm} against the structured backward error obtained using Theorem \ref{thm:backward-error-symmetric}, comparing it with the  one for unstructured backward error.

 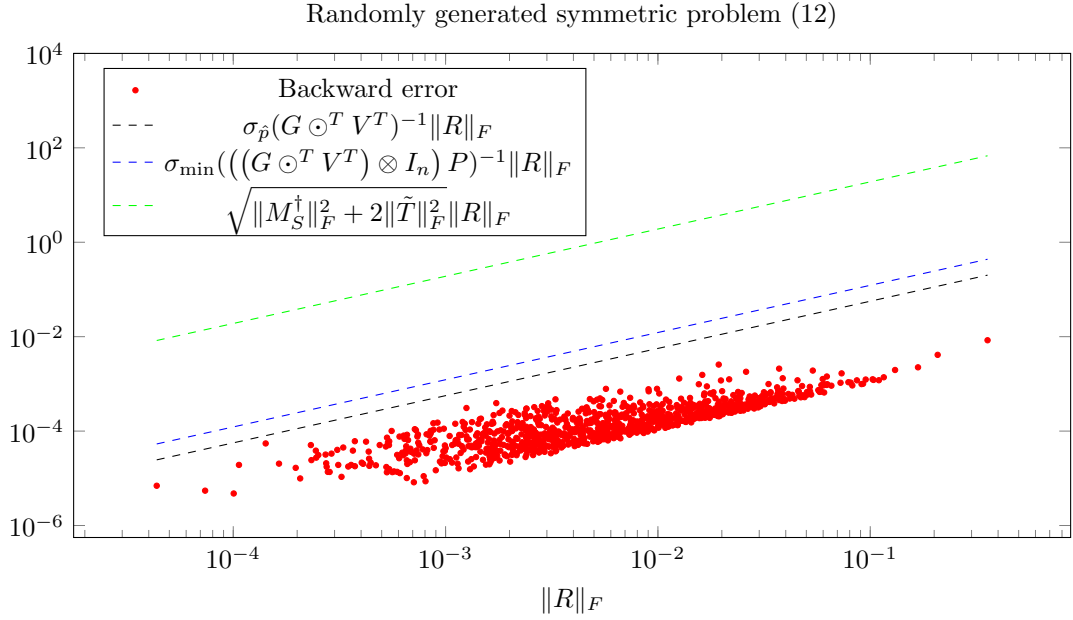
\begin{figure}[h]
     \centering
     \begin{tikzpicture}
       \begin{loglogaxis}[
             legend pos = north west,
             width=\linewidth,
             ymax=10^4,
             height=80mm,
             title={Randomly generated symmetric problem \eqref{eq:random-1}},
             xlabel={$\| R \|_F$}
         ]
         \addplot[only marks, red, mark size = 1pt] table[y index = 1] {linear_structured_bounds_check_symm_bound_n64.dat};
         \addplot[no marks, black, dashed] table[y index = 2] {linear_structured_bounds_check_symm_bound_n64.dat};
         \addplot[no marks, blue, dashed] table[y index = 3] {linear_structured_bounds_check_symm_bound_n64.dat};
         \addplot[no marks, green , dashed] table[y index = 4] {linear_structured_bounds_check_symm_bound_n64.dat};
         \legend{Backward error, $\sigma_{\hat p}(G \odot^T V^T)^{-1} \| R \|_F$,
          $\sigma_{\min}(\left( \left(
            G \odot^T V^T
           \right) \otimes I_n \right) P)^{-1} \|{R}\|_F$,
         $\sqrt{\| M_{S}^{\dagger}\|_F^2 + 2 \|\tilde T \|_F^2 }\| R\|_F$}
       \end{loglogaxis}
     \end{tikzpicture}
      \caption{Comparison between the bounds for structured backward error in Theorem \ref{th:struct_back_err_with_eigenpairs} and
 Corollary \ref{cor:corollary_bound_symm}, applied to problem \eqref{eq:random-1} with symmetric coefficients. For completeness, we report the unstructured bound.}
     \label{fig:random_symmetric_3bounds}
 \end{figure}

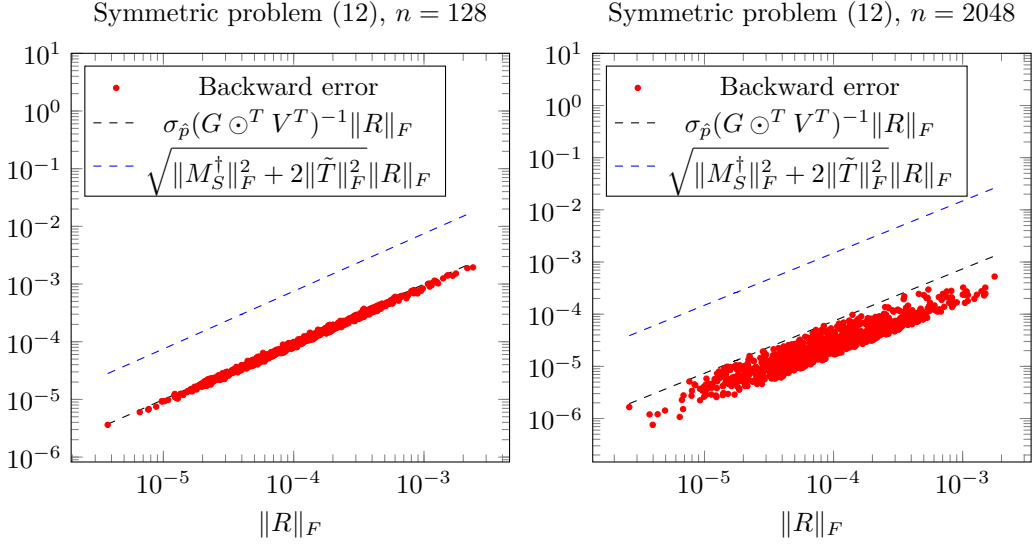
\begin{figure}[h!]
    \centering
    \begin{tikzpicture}
      \begin{loglogaxis}[
            legend pos = north west,
            width=.5\linewidth,
            ymax=10,
            height=7cm,
            title={Symmetric problem \eqref{eq:random-1}, $n=128$},
            xlabel={$\| R \|_F$}
        ]
        \addplot[only marks, red, mark size = 1pt] table[y index = 1] {symmetric_bounds_check_small.dat};
        \addplot[no marks, black, dashed] table[y index = 2] {symmetric_bounds_check_small.dat};
        \addplot[no marks, blue, dashed] table[y index = 3] {symmetric_bounds_check_small.dat};
        \legend{Backward error, $\sigma_{\hat p}(G \odot^T V^T)^{-1} \| R \|_F$, $\sqrt{\| M_{S}^{\dagger}\|_F^2 + 2 \|\tilde T \|_F^2 }\| R\|_F$}
      \end{loglogaxis}
    \end{tikzpicture}~\begin{tikzpicture}
      \begin{loglogaxis}[
            legend pos = north west,
            width=.5\linewidth,
            ymax=10,
            height=7cm,
            title={Symmetric problem \eqref{eq:random-1}, $n=2048$},
            xlabel={$\| R \|_F$}
        ]
    \addplot[only marks, red, mark size = 1pt] table[y index = 1] {symmetric_bounds_check_large.dat};
        \addplot[no marks, black, dashed] table[y index = 2] {symmetric_bounds_check_large.dat};
        \addplot[no marks, blue, dashed] table[y index = 3] {symmetric_bounds_check_large.dat};
        \legend{Backward error, $\sigma_{\hat p}(G \odot^T V^T)^{-1} \| R \|_F$, $\sqrt{\| M_{S}^{\dagger}\|_F^2 + 2 \|\tilde T \|_F^2 }\| R\|_F$}
      \end{loglogaxis}
    \end{tikzpicture}
    \caption{Test the structured bound in Corollary \ref{cor:corollary_bound_symm}, for randomly generated symmetric problems in \eqref{eq:random-1}. On the left: size $n=128$. On the right: size $n=2048$. The bound for unstructured case, which does not hold, is reported for completeness.}
    \label{fig:random_symmetric}
\end{figure}

\subsection{Riemannian optimization}
\label{sub:experiments_riemannian}

\subsubsection{Quadratic polynomial eigenvalue problem}

We consider the nonlinear matrix-valued function:
\begin{equation}
    \label{eq:quadratic_eigenvalue_problem}
F(\lambda) = A_0 + \lambda A_1 + \lambda^2 A_2 \in \mathbb{R}^{n \times n}, \quad n=10000,
\end{equation}
where the matrix $A_0= \mbox{tridiag}(1,-2,1)$, $A_1=-UU^T$ is a low-rank matrix with a randomly generated matrix $U \in \mathbb{R}^{10000 \times 2}$ and $A_2$ is the identity matrix. We consider an approximation of two eigenpairs $(\hat \lambda_i, \hat v_i)$ for $i=1,2$ and perturb the matrix coefficients keeping the same structures:
\[
\widetilde{F}(\lambda)= (A_0 + \tilde A_0)+ \lambda \tilde A_1 + \lambda^2 (A_2 + \tilde A_2),
\]
where $\tilde A_0$ is a randomly generated tridiagonal matrix, $\tilde A_1 = -(U+\tilde U)(U +\tilde U)^T$ with $\tilde U \in \mathbb{R}^{n \times 2}$ randomly generated and $\tilde A_2$ a multiple of the identity. The norm of the perturbation $\| \tilde F- F \|_F$ is in the order of $1.992017$.

In order to apply the method proposed in Subsection \ref{sec:nonlinear-structures}, we consider the following product manifold:
\[
\mathcal{S}:=\mathcal{S}_1  \times \mathbb{R}_2^{n\times n} \times \mathcal{S}_2,
\]
where $\mathcal{S}_1$ is the manifold of sparse matrices with the same sparsity patterns of $A_0$, $\mathbb{R}_2^{n\times n}$ is the manifold of rank $2$ real matrices of size $n \times n$ and $\mathcal{S}_2$ is the manifold of the matrices that are multiples of the identity. The implementation of the method requires the use of the \texttt{manopt} package for MATLAB, version $7.1$. Observe that the manifold $\mathcal{S}_2$ is not available in \texttt{manopt}, then we used our implementation of this manifold.

The running time for the computation of the structured backward error associated with two approximate eigenpairs of $F(\lambda)$ is $199.1613$ seconds. We obtain an upper bound for the backward error equal to $3.521105 \times 10^{-2}$ and a norm of the residual equal to $4.015467 \times 10^{-9}$. 

In practice, this experiment can be repeated as coded in Listing \ref{lst:riemann}, where \texttt{f} is a function for the evaluation of $[1, \lambda, \lambda^2]$ and \texttt{(V,L)} are approximate eigenpairs. The command \texttt{be\_riemannian} calls the manifolds that we need for the optimization procedure, where \texttt{'identity'} refers to our implementation of the manifold $\mathcal{S}_2$. The MATLAB functions can be found in the github repository \url{https://github.com/miryamgnazzo/backward-error-nonlinear}.

\begin{lstlisting}[label={lst:riemann}, caption={Code for the experiment in \eqref{eq:quadratic_eigenvalue_problem}},
frame=single,
style=Matlab-Pyglike]
F = { A0, A1, A2 }; %cell array of coefficient matrices

D = be_riemannian(F, @f, ...
        { 'sparse', 'low-rank', 'identity' }, V, L);
nrm = be_norm(D); %Computed backward error;
\end{lstlisting}

\subsubsection{The beam problem with prescribed sparsity pattern}

Consider again the beam problem stated in Subsection \ref{subsubsec:beam unstructured}. We consider different dimensions $n$ for the matrix-valued function in \eqref{eq:beam} and we apply the Riemannian optimization-based approach in Section \ref{sec:nonlinear-structures}, preserving the sparsity structures of the coefficients, that leads to the product manifold:
\[
\mathcal{S} := \mathcal{S}_0 \times \mathcal{S}_1 \times \mathcal{S}_2, 
\]
where $\mathcal{S}_0$ is the manifold of matrices that are multiples of the identity, $\mathcal{S}_1$ is the manifold of tridiagonal matrices and $\mathcal{S}_2$ the one of multiples of the matrix $e_n e_n^T$. Observe that the involved structures are linear, nevertheless we compute an approximation of the structured backward error, in order to provide a few examples on matrices of large size. 

We compute $\hat V \in \mathbb{R}^{n \times 3}$ and $\hat \Lambda \in \mathbb{R}^{3 \times 3}$, approximations of $p=3$ eigenvectors and eigenvalues of $D(\lambda)$ in \eqref{eq:beam}, respectively, then perturb it by 
\[
\Delta D(\lambda) = -\Delta_0 \lambda + \Delta_1 + \Delta_2 e^{-\lambda},
\]
where $F_i + \Delta_i \in \mathcal{S}_i$ for $i=0,1,2$, since $F_0=I_n$, $F_1=A_0$ and $F_2=A_1$. The algorithm in Subsection \ref{sec:nonlinear-structures} provides final matrices $\Delta F_1, \Delta  F_2, \Delta F_3$, which we use to define the (approximated) structured backward error $\eta_{\mathcal{S}}= \|\begin{bmatrix} \Delta F_0 & \Delta F_1 & \Delta  F_2 \end{bmatrix} \|_F$. We test the accuracy of our solution computing the norm of the residual
\[
R:= -(I_n + \Delta F_0) \hat V \hat \Lambda + (A_0 + \Delta F_1) \hat V + (A_1 + \Delta F_2) \hat V \exp(-\hat \Lambda). 
\]
In Table \ref{tab:comparison_beam}, we collect the results obtained considering different sizes $n$. In particular, we provide a comparison among the elapsed time (expressed in seconds), the (approximated) structured backward error $\eta_{\mathcal{S}}$, the norm of the residual $R$ and the Frobenius norm of the starting perturbation matrices.

\begin{table}[h!]
    \centering
    \begin{tabular}{|c|c|c|c|c|}
    \hline
        $n$ & time (in seconds) & $\eta_{\mathcal{S}}$ & $\| R \|_F$ & $\| \Delta D\|_F$ \\
        \hline
        \hline
        $10^3$ & $52.9975$ & $5.564350 \times 10^{-3}$ & $2.351591\times 10^{-9}$ & $5.625286 \times 10^{-3}$ \\
        $2 \times 10^3$ & $62.2178$ & $7.798143\times 10^{-3}$ & $9.967606\times 10^{-10}$ & $7.811920\times 10^{-3}$ \\
        $5 \times 10^3$ & $94.1982$ & $1.383995 \times 10^{-2}$ & $3.560193\times 10^{-9}$ & $1.421781 \times 10^{-2}$ \\
        $10^4$ & $145.4670$ & $1.814495 \times 10^{-2}$ & $4.786304 \times 10^{-9}$ & $1.859965 \times 10^{-2}$ \\
        $2 \times 10^4$ & $249.8171$ & $2.584339 \times 10^{-2}$ & $2.480937 \times 10^{-9}$ & $2.616653 \times 10^{-2}$\\ 
        $5 \times 10^4$ & $945.6344$ & $3.759764 \times 10^{-2}$ & $6.855576 \times 10^{-9}$ & $3.877037 \times 10^{-2}$\\
        $10^5$ & $1.6083 \times 10^{3}$ & $5.295991 \times 10^{-2}$ & $1.012763 \times 10^{-8}$ & $5.703946 \times 10^{-2}$\\
        \hline
    \end{tabular}
    \caption{Results for the beam problem \eqref{eq:beam}, with different sizes $n$.}
    \label{tab:comparison_beam}
\end{table}

\section*{Conclusions}

We propose a backward error analysis for nonlinear eigenvalue problems
given in split form. We presented a novel formula for the computation of 
the backward errors for a given set of eigenpairs or eigenvalues, and 
explicitly, computable, and inexpensive upper bounds for them. These 
bounds have been verified to be tight and descriptive on a set of 
examples arising from standard benchmark collections. 

We discussed in detail how to impose different structures on the backward
errors. For the case of coefficients living in a linear subspace, we have 
extended the previous analysis, and provided computable bounds. The bounds 
are in particular still relatively inexpensive for the relevant case of 
symmetric coefficients. 

For more general structures, where coefficients are in a
differentiable manifold, we have provided an effective 
algorithm for the computation of the backward error, based on a 
Riemannian optimization technique. This allows to compute backward errors for 
problems with low-rank coefficients, but also for the ones where the constraint is linear, such as prescribed sparsity pattern, or symmetries, and any 
combination of these. We have verified the effectiveness and 
the scalability of this approach, which is able to give explicit bounds 
for large-scale structured nonlinear eigenvalue problems. 

\section*{Acknowledgments}
Miryam Gnazzo and Leonardo Robol are members of the INdAM Research group GNCS (Gruppo Nazionale di Calcolo Scientifico). The work of Leonardo Robol was partially supported by the National Research Center in High Performance Computing, Big Data and Quantum Computing (CN1 -- Spoke 6), by the MIUR Excellence Department Project awarded to the Department of Mathematics, University of Pisa (CUP I57G22000700001), 
and by the Italian Ministry of University and Research (MUR) through the PRIN 2022 ``MOLE: Manifold constrained Optimization and LEarning'',  code: 2022ZK5ME7 MUR D.D. financing decree n. 20428 of November 6th, 2024 (CUP I53C24002260006).
The work of Miryam Gnazzo was partially supported 
by the PRIN 2022 Project ``Low-rank Structures and Numerical Methods in Matrix and Tensor Computations and their Application''.
Furthermore, during part of the preparation of this work, Miryam Gnazzo was affiliated with Gran Sasso Science Institute, L'Aquila (Italy).

\section*{Declarations}

\textbf{Conflict of interest} The authors have no competing interests to declare that are relevant to the content of
this article.

\bibliographystyle{plain}
\bibliography{biblio}

\end{document}